\newcommand\reallywidehat[1]{%
\savestack{\tmpbox}{\stretchto{%
  \scaleto{%
    \scalerel*[\widthof{\ensuremath{#1}}]{\kern-.6pt\bigwedge\kern-.6pt}%
    {\rule[-\textheight/2]{1ex}{\textheight}}%WIDTH-LIMITED BIG WEDGE
  }{\textheight}% 
}{0.5ex}}%
\stackon[1pt]{#1}{\tmpbox}%
}
\newcommand{\calA}{\mathcal{A}}
\newcommand{\calL}{\mathcal{L}}
\newcommand{\calR}{\mathcal{R}}
\newcommand{\calU}{\mathcal{U}}
\newcommand{\calV}{\mathcal{V}}
\newcommand{\mC}{\mathbb{C}}
\newcommand{\mD}{\mathbb{D}}
\newcommand{\mF}{\mathbb{F}}
\newcommand{\mN}{\mathbb{N}}
\newcommand{\mR}{\mathbb{R}}
\newcommand{\bbe}{\mathbf{e}}
\newcommand{\bbn}{\mathbf{n}}
\newcommand{\bbv}{\mathbf{v}}
\newcommand{\bbw}{\mathbf{w}}
\newcommand{\bbz}{\mathbf{z}}
\newcommand{\bbeta}{\bm{\eta}}
\newcommand{\bxi}{\bm{\xi}}
\newcommand{\bzeta}{\bm{\zeta}}
\newcommand{\SL}{{\textrm{SL}}}
\newcommand{\eSL}{{\textrm{\em SL}}}
\newcommand{\E}{{\textrm{E}}}
\newcommand{\eE}{{\textrm{\em E}}}
\newtheorem{theorem}{Theorem}[section]
\newtheorem{lemma}[theorem]{Lemma}
\newtheorem{corollary}[theorem]{Corollary}
\newtheorem{proposition}[theorem]{Proposition}
\theoremstyle{definition}
\theoremstyle{definition}
\newtheorem{definition}[theorem]{Definition}
\theoremstyle{definition}
\theoremstyle{definition}
\begin{document}

\keywords{symmetric polynomials, polydisc algebra, Wiener algebra, Banach algebras, maximal ideal space, Hermite ring, projective free ring, coherent ring, corona theorem, Gelfand topology}

\subjclass[2010]{Primary 32A38; Secondary 46J15, 32A65, 46J20, 54C40}

 \title[]{Banach algebras of symmetric functions on the polydisc}
 
 \author[]{Amol Sasane}
\address{Department of Mathematics \\London School of Economics\\
     Houghton Street\\ London WC2A 2AE\\ United Kingdom}
\email{A.J.Sasane@lse.ac.uk}
 
\maketitle
 
 \vspace{-0.69cm} 
  
\begin{abstract}
Let $\mD=\{z\in \mC:|z|<1\}$ and for an integer $d\geq 1$, let $S_d$ denote the symmetric group, consisting of of all permutations of the set $\{1,\cdots, d\}$. A function  $f:\mD^d\rightarrow \mC$ is symmetric if 
$f(z_1,\cdots, z_d)=f(z_{\sigma(1)},\cdots, z_{\sigma (d)})$ for all $\sigma \in S_d$ and all $(z_1,\cdots, z_d)\in \mD^d$. The polydisc algebra $A(\mD^d)$ is the Banach algebra of all holomorphic functions $f$ on the polydisc $\mD^d$ that can be continuously extended to the closure of the polydisc in $\mC^d$, with pointwise operations and the supremum norm (given by $\|f\|_\infty:=\sup_{\bbz \in \mD^d} |f(\bbz)|$). 
Let $A_{\textrm{sym}}(\mD^d)$ be the 
Banach subalgebra of $A(\mD^d)$ consisting of all symmetric functions in the polydisc algebra. Algebraic-analytic properties of $A_{\textrm{sym}}(\mD^d)$ are investigated. In particular, the following results are shown: the corona theorem, description of the maximal ideal space and its contractibility,  Hermiteness, projective-freeness, and non-coherence. 
\end{abstract}
 
 \section{Introduction}
 
 \noindent Symmetric polynomials and the ring of symmetric functions are interesting objects in algebra. 
 We study an analogous object from the Banach algebra viewpoint. The aim of this article is to study the algebraic-analytic properties of Banach algebras of `symmetric functions' (defined below) on the polydisc.
 
 \begin{definition}
 $\;$
 
 \noindent $\bullet$  Let $\mD:=\{z\in \mC: |z|<1\}$.  Let $\overline{\mD}$ denote the closure of $\mD$ in $\mC$. 
 
 \smallskip 
 
 \noindent  $\bullet$ Let $\mN:=\{1,2,3,\cdots\}$. For $d\in \mN$, let $A(\mD^d)$ denote the {\em polydisc}
 \phantom{$\bullet$ }{\em algebra} of all functions $f:\overline{\mD}^d\rightarrow \mC$ that are holomorphic in $\mD^d$ and  \phantom{$\bullet$ }continuous on $\overline{\mD}^d$. 
 Define 
 $\displaystyle 
\|f\|_\infty:=\sup_{\bbz \in \mD^d}|f(\bbz)|$ for $ f\in A(\mD^d).
$ 

\noindent \phantom{$\bullet$ }Then $A(\mD^d)$ is  a Banach algebra with pointwise operations and the \phantom{$\bullet$ }{\em supremum norm} $\|\cdot\|_\infty$. 
 
 \smallskip 
 
\noindent   $\bullet$ 
Let $d\in \mN$. Then $S_d$ denotes the {\em symmetric group}, 
consisting of all \phantom{$\bullet$ }bijective maps $\sigma:\{1,\cdots, d\}\rightarrow \{1,\cdots, d\}$, with 
composition $\circ$ taken \phantom{$\bullet$ }as the group operation.  The group $S_d$ acts on $\mC^d$ as follows. 

\noindent 
\phantom{$\bullet$ }If $\bbz=(z_1,\cdots, z_d)\in \mC^d$ and $\sigma \in S_d$, then we define 
$\sigma\;\!\bbz \in \mC^d$ by 
$$
\sigma \;\!\bbz:=(z_{{\scaleobj{0.75}{\sigma(1)}}},\cdots, z_{{\scaleobj{0.75}{\sigma(d)}}}).
$$
\phantom{$\bullet$ }A function $f\in A(\mD^d)$ is {\em symmetric} if for all $\bbz\in \mD^d$, and all $\sigma \in S_d$, $\;$\phantom{$\bullet$ }we have $f(\bbz)=f(\sigma\;\! \bbz)=:(\sigma f)(\bbz)$. 

\smallskip 

\noindent $\bullet$ Define the {\em polydisc algebra of symmetric functions} by 
$$
A_{\textrm{sym}}(\mD^d):=\{f\in A(\mD^d): f\textrm{ is symmetric}\}.
$$
\end{definition}

\noindent Then $A_{\textrm{sym}}(\mD^d)$ is a Banach subalgebra of $A(\mD^d)$ with the same operations and the same norm. Closure under addition, scalar multiplication, and multiplication is clear. 
Also, if $(f_n)_{n\in \mN}$ is a Cauchy sequence in $A_{\textrm{sym}}(\mD^d)$, then being Cauchy in $A(\mD^d)$ too, it converges to some $f\in A(\mD)$, and as uniform convergence implies pointwise convergence, 
$$
f(\sigma\;\! \bbz)= \lim_{n\rightarrow \infty} f_n(\sigma\;\! \bbz)
=\lim_{n\rightarrow \infty} f_n(\bbz)
=f(\bbz) \quad (\bbz\in \mD^d).
$$
In this article, we focus on the algebraic-analytic properties of the symmetric polydisc algebra 
as a concrete prototypical example. However, more generally, for a `symmetric' domain $\Omega\subset \mC^d$, one could also study the properties of appropriately defined `symmetric function algebras' such as $A_{\textrm{sym}}(\Omega)$, $ H^\infty_{\textrm{sym}}(\Omega)$ (symmetric subalgebra of the Hardy algebra $H^\infty(\Omega)$ of bounded and holomorphic functions on $\Omega$), etc.

\section{Corona problem}

\noindent Let $\mathbf{1}\in A_{\textrm{sym}}(\mD^d)$ be the constant function taking value $1$ everywhere. 

\begin{theorem}
\label{theorem_25_dec_2021_19:52}
Let $n\in \mN$ and $f_1,\cdots, f_n\in A_{\textrm{\em sym}}(\mD^d)$. 

\noindent Then the following are equivalent:

\noindent {\em (1)} There exist $g_1,\cdots, g_n\in A_{\textrm{\em sym}}(\mD^d)$ such that $f_1g_1+\cdots+f_ng_n=\mathbf{1}$. 

\noindent {\em (2)} There exists a $\delta>0$ such that $\forall \bbz\in \mD^d$, $|f_1(\bbz)|+\cdots+|f_n(\bbz)|\geq \delta$.
\end{theorem}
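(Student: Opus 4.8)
The implication $(1)\Rightarrow(2)$ is the elementary direction and I would dispose of it first. Assuming $(1)$, put $M:=\max_{1\le i\le n}\|g_i\|_\infty$; then for every $\bbz\in\mD^d$,
$$
1=|\mathbf 1(\bbz)|=\Big|\sum_{i=1}^n f_i(\bbz)\,g_i(\bbz)\Big|\le M\sum_{i=1}^n|f_i(\bbz)|,
$$
so $(2)$ holds with $\delta:=1/M$ (note $M>0$ since $\mathbf 1\ne 0$). The substance of the theorem is the converse $(2)\Rightarrow(1)$, and my plan is to solve the B\'ezout equation first in the larger algebra $A(\mD^d)$, ignoring symmetry, and then to \emph{force} symmetry by averaging the solution over the group $S_d$.

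\emph{Step 1 (solve in $A(\mD^d)$).} By continuity of the $f_i$ on $\overline{\mD}^d$, the lower bound in $(2)$ persists on the closed polydisc, so $f_1,\dots,f_n$ have no common zero on $\overline{\mD}^d$. Since the maximal ideal space of the polydisc algebra $A(\mD^d)$ is (classically identified with) $\overline{\mD}^d$, with characters given by point evaluations, this says exactly that $f_1,\dots,f_n$ lie in no common maximal ideal of $A(\mD^d)$. By the standard Gelfand-theoretic criterion in a commutative unital Banach algebra, finitely many elements with no common zero on the maximal ideal space generate the unit ideal; hence there exist $\tilde g_1,\dots,\tilde g_n\in A(\mD^d)$ --- not necessarily symmetric --- with $\sum_{i=1}^n f_i\,\tilde g_i=\mathbf 1$. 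This is the one external ingredient I rely on (the corona/B\'ezout statement for the full polydisc algebra, itself a consequence of the identification of its maximal ideal space with $\overline{\mD}^d$).

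\emph{Step 2 (symmetrize).} For $\sigma\in S_d$ the map $f\mapsto\sigma f$, $(\sigma f)(\bbz):=f(\sigma\bbz)$, is an isometric algebra automorphism of $A(\mD^d)$, since it merely permutes coordinates and preserves both $\overline{\mD}^d$ and $\|\cdot\|_\infty$; moreover $f$ is symmetric precisely when $\sigma f=f$ for all $\sigma$. Applying $\sigma$ to the identity of Step 1 and using $\sigma f_i=f_i$ gives, for every $\sigma\in S_d$,
$$
\sum_{i=1}^n f_i\,(\sigma\tilde g_i)=\sigma\Big(\sum_{i=1}^n f_i\,\tilde g_i\Big)=\sigma\mathbf 1=\mathbf 1 .
$$
I then average over the group and set
$$
g_i:=\frac{1}{|S_d|}\sum_{\sigma\in S_d}\sigma\,\tilde g_i\in A(\mD^d),
$$
which is a finite average of elements of $A(\mD^d)$ and therefore again lies in $A(\mD^d)$; averaging the displayed identities yields $\sum_{i=1}^n f_i\,g_i=\mathbf 1$. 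Finally, the substitution $\sigma\mapsto\tau\sigma$ in the sum shows $\tau g_i=g_i$ for every $\tau\in S_d$, so each $g_i$ is symmetric, i.e.\ $g_i\in A_{\textrm{sym}}(\mD^d)$, establishing $(1)$.

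I do not anticipate a genuine obstacle. The only nontrivial input is the polydisc corona statement in Step 1, which is classical; the new content is the elementary observation that the symmetrization of Step 2 keeps the solution inside $A_{\textrm{sym}}(\mD^d)$. The averaging trick is robust precisely because the $f_i$ are $S_d$-invariant and $S_d$ acts by algebra automorphisms, and because $S_d$ is finite no convergence or completeness issue arises.
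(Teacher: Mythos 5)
Your proof is correct and takes essentially the same route as the paper's: the same triangle-inequality argument with $\delta=\bigl(\max_i\|g_i\|_\infty\bigr)^{-1}$ for $(1)\Rightarrow(2)$, and for $(2)\Rightarrow(1)$ the corona theorem for the full polydisc algebra $A(\mathbb{D}^d)$ followed by averaging the B\'ezout solution over $S_d$, exploiting that the $f_i$ are $S_d$-invariant and that permutations act as algebra automorphisms. The only (harmless) difference is that you justify the $A(\mathbb{D}^d)$ corona statement via the identification of its maximal ideal space with $\overline{\mathbb{D}}^d$, whereas the paper simply invokes it.
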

\begin{proof} (1)$\Rightarrow$(2) follows from the triangle inequality: For all $\bbz\in \mD^d$, 
\begin{eqnarray*}
0<1=|\mathbf{1}(\bbz)|\!\!&=&\!\!|f_1(\bbz)g_1(\bbz)+\cdots+f_n(\bbz)g_n(\bbz)|\\
\!\!&\leq&\!\! ( |f_1(\bbz)|+\cdots+|f_n(\bbz)|) \max_{1\leq i\leq n} \|g_i\|_\infty,
\end{eqnarray*}
and so we may take $\delta:=\displaystyle \Big(\max_{1\leq i\leq n} \|g_i\|_\infty\Big)^{-1}>0$. 

\smallskip

\noindent 
The converse (2)$\Rightarrow$(1) follows from the corona theorem for $A(\mD^d)$, and by `symmetrising'  the solution as follows. The corona condition (2) gives the existence of $g_1,\cdots, g_n\in A(\mD^n)$ such that 
 $
f_1g_1+\cdots+f_ng_n=\mathbf{1}.
$ 
For any permutation $\sigma \in S_d$, we have for all $\bbz=(z_1,\cdots, z_d)\in \mD^d$ that 
\begin{eqnarray*}
1\!\!\!&=&\!\!\!\sum_{1\leq i \leq n} 
f_i(z_{{\scaleobj{0.75}{\sigma(1)}}},\cdots, z_{{\scaleobj{0.75}{\sigma(d)}}})\;\!g_i(z_{{\scaleobj{0.75}{\sigma(1)}}},\cdots, z_{{\scaleobj{0.75}{\sigma(d)}}})\\
\!\!\!&=&\!\!\!
\sum_{1\leq i \leq n} f_i(z_{{\scaleobj{0.75}{1}}}, \cdots, z_{{\scaleobj{0.75}{d}}}) \;\!g_i(z_{{\scaleobj{0.75}{\sigma(1)}}},\cdots, z_{{\scaleobj{0.75}{\sigma(d)}}}).
\end{eqnarray*}
Summing the above over all $\sigma \in S_d$, we obtain 
  $
 f_1\widetilde{g}_1+\cdots+f_n \widetilde{g}_n=\mathbf{1},
 $ 
 where $\widetilde{g}_1,\cdots, \widetilde{g}_n\in A_{\textrm{sym}}(\mD^d)$ are defined by
$$
\phantom{AAa}
\widetilde{g}_i(\bbz) := \frac{1}{d!} \sum_{\sigma\in S_d} g_i(z_{{\scaleobj{0.75}{\sigma(1)}}},\cdots, z_{{\scaleobj{0.75}{\sigma(d)}}}), \quad 
\bbz=(z_1,\cdots, z_d)\in \mD^d.
\phantom{AAa}\qedhere
$$
\end{proof}

\section{Maximal ideal space of $A_{\mathrm{sym}}(\mD^d)$}

\noindent Let $\sim$ be the relation on $\overline{\mD}^d$ defined as follows:  
 $\bbz \sim\!\bbw$ if there exists a permutation $\sigma \in S_d$ such that 
$$
\sigma\;\! \bbz= \bbw.
$$
Then $\sim$ is an equivalence relation on $\overline{\mD}^d$. 
For $\bbz\in \overline{\mD}^d$, the {\em orbit} of $\bbz$ is 
$$
S_d\;\! \bbz:=\{\sigma\;\! \bbz: \sigma\in S_d\}, \quad 
$$  
The equivalence class of $\bbz \in \overline{\mD}^d$ is denoted by 
$$
[\bbz]=S_d \;\!\bbz.
$$
 Let $\overline{\mD}^d/\!\sim$ denote the set of all equivalence classes of $\overline{\mD}^d$ under the equivalence relation $\sim$. The natural projection map 
$$
\begin{array}{rccc} 
\pi:& \overline{\mD}^d & \rightarrow & \overline{\mD}^d/\!\sim\\[0.1cm]
& \bbz & \mapsto &[\bbz] 
\end{array}
$$
is surjective. We use the subspace topology on $\overline{\mD}^d$ induced from the usual Euclidean topology on $\mC^d$. 
We endow the set $\overline{\mD}^d/\!\sim$ with the quotient topology $\tau_{\textrm{quot}}$, 
\phantom{$\overline{\mD}^d$}\!\!\!\!\!\!\!that is, the finest/strongest topology making the above projection map $\pi:\overline{\mD}^d  \rightarrow  \overline{\mD}^d/\!\sim$ continuous. As the topological space $\overline{\mD}^d$ is compact, it follows that the quotient space $(\overline{\mD}^d/\!\sim, \tau_{\textrm{quot}})$ is compact too. 
For any set $X\subset \overline{\mD}^d$ and any $\sigma \in S_d$, 
define 
$$
\sigma X:=\{ \sigma \;\!\bxi :\bxi \in X\}.
$$

\begin{proposition}
\label{proposition_27_dec_2021_16:18}
$(\overline{\mD}^d/\!\sim,\tau_{\textrm{\em quot}})$ is Hausdorff.
\end{proposition}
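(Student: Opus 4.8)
The plan is to exploit the fact that $S_d$ is a \emph{finite} group acting on the compact metric space $\overline{\mD}^d$ by homeomorphisms, together with the general principle that the quotient of a Hausdorff space by such an action is again Hausdorff. Concretely, I would fix two distinct classes $[\bbz]\neq[\bbw]$ and aim to produce disjoint \emph{saturated} open sets $U\ni \bbz$ and $V\ni \bbw$ in $\overline{\mD}^d$ (a set $W$ is called saturated if it is a union of orbits, equivalently $\sigma W=W$ for every $\sigma\in S_d$); their images $\pi(U)$ and $\pi(V)$ will then be the required separating neighbourhoods in the quotient.

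Two preliminary observations make this strategy work. First, $\pi$ is an open map: for any open $W\subseteq \overline{\mD}^d$ one has
$$
\pi^{-1}(\pi(W))=\bigcup_{\sigma\in S_d}\sigma W,
$$
which is open because each $\sigma$ acts as a homeomorphism, and hence $\pi(W)$ is open by the definition of $\tau_{\textrm{quot}}$. Second, if $U$ and $V$ are disjoint and saturated, then $\pi(U)$ and $\pi(V)$ are disjoint: a common point $[\bbx]$ would have a representative in $U$ and a representative in $V$, and saturation forces the whole orbit $S_d\bbx$ into each, whence $\bbx\in U\cap V=\varnothing$. Thus everything reduces to constructing the disjoint saturated neighbourhoods $U$ and $V$.

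Here I would use that the coordinate-permutation action is \emph{isometric} for the Euclidean metric $\rho$ on $\mC^d$, since $\rho(\sigma\bbx,\sigma\bby)=\rho(\bbx,\bby)$ for all $\sigma\in S_d$. Because $[\bbz]\neq[\bbw]$, the orbits $S_d\bbz$ and $S_d\bbw$ are disjoint finite (hence compact) subsets of $\overline{\mD}^d$, so $r:=\rho(S_d\bbz,S_d\bbw)>0$. Taking $\epsilon:=r/3$, I would let $U$ and $V$ be the $\epsilon$-neighbourhoods (inside $\overline{\mD}^d$) of the orbits $S_d\bbz$ and $S_d\bbw$ respectively. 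Each of $U,V$ is open and contains the relevant point, and each is saturated: an isometry $\sigma$ permutes the centres within an orbit and preserves radii, so $\sigma U=U$ and $\sigma V=V$. Finally $U\cap V=\varnothing$, since a common point would place two orbit representatives within distance $2\epsilon<r$, contradicting the definition of $r$. Applying the two observations above then separates $[\bbz]$ and $[\bbw]$.

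The one point requiring genuine care --- the main obstacle, such as it is --- is the saturation of the separating sets, since naive disjoint neighbourhoods of $\bbz$ and $\bbw$ need not descend to open or disjoint subsets of the quotient. The isometric nature of the coordinate permutations resolves this cleanly; alternatively, one may dispense with the metric and argue in any Hausdorff space $X$ by first separating the finite disjoint orbits by disjoint open sets $U_0,V_0$ and then replacing them with the saturated sets $\bigcap_{\sigma\in S_d}\sigma U_0$ and $\bigcap_{\sigma\in S_d}\sigma V_0$, which remain disjoint and still contain the respective orbits.
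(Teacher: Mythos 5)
Your proof is correct, and its overall skeleton --- separating the two disjoint finite orbits $S_d\bbz$ and $S_d\bbw$ by disjoint \emph{saturated} open sets $U,V$ and pushing these down to the quotient --- is exactly the skeleton of the paper's proof. Where you differ is in how the saturated sets are manufactured: you exploit the fact that coordinate permutations are isometries for the Euclidean metric, so the $\epsilon$-neighbourhoods of the orbits with $\epsilon=\tfrac{1}{3}\rho(S_d\bbz,S_d\bbw)$ are automatically $S_d$-invariant, open, and disjoint. The paper instead argues purely topologically: it first separates the orbits by disjoint open sets $Z,W$ (using only that $\overline{\mD}^d$ is Hausdorff and the orbits are finite) and then saturates by setting $U=\bigcap_{\sigma\in S_d}\sigma Z$ and $V=\bigcap_{\sigma\in S_d}\sigma W$ --- which is precisely the metric-free alternative you sketch in your final sentence, so you have in effect reproduced the paper's argument as a remark. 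Your metric route is more concrete and quantitatively explicit; the paper's route is more portable, proving the general fact that the quotient of any Hausdorff space by a finite group of homeomorphisms is Hausdorff, with no metric in sight. One further cosmetic difference: you package the descent to the quotient via the observation that $\pi$ is an open map (from $\pi^{-1}(\pi(W))=\bigcup_{\sigma\in S_d}\sigma W$), whereas the paper verifies directly that $\pi^{-1}\calU=U$ for the saturated $U$; these are the same computation dressed differently, and both correctly handle the key pitfall you flag, namely that non-saturated disjoint neighbourhoods need not have disjoint (or open) images in the quotient.
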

\begin{proof} Let $[\bbz],[\bbw]\in \overline{\mD}^d/\!\sim$ (where $\bbz,\bbw\in \overline{\mD}^d$) be such  that $[\bbz]\neq [\bbw]$. Then $(S_d \;\!\bbz)\cap (S_d \;\!\bbw)=\emptyset$. \phantom{$\overline{\mD}^d$}\!\!\!\!\!\!\!
But  $S_d\;\! \bbw$ is a finite set (with $d!$ elements). As $\overline{\mD}^d$ with its usual Euclidean topology is Hausdorff, it follows that there 
exist  open sets $Z,W$ in $\overline{\mD}^d$ such that $S_d\;\!\bbz\subset Z$, $S_d \;\!\bbw \subset W$, and $Z\cap W=\emptyset$. 
For any $\sigma \in S_d$, the map $\bzeta \mapsto \sigma \bzeta:\overline{\mD}^d\rightarrow \overline{\mD}^d$ is a homeomorphism, and so $\sigma Z=\{\sigma\bzeta:\bzeta \in Z\}$ is open 
in $\overline{\mD}^d$. 
Define 
$$
U=\bigcap_{\sigma\in S_d} \sigma Z\;\;\textrm{ and }\;\;
V:=\bigcap_{\sigma\in S_d} \sigma W.
$$
Then $U,V$ are open in $\overline{\mD}^d$. Moreover, for all $\sigma \in S_d$, $\sigma\;\! U=U$ and $\sigma V=V$. For any $\sigma \in S_d$, we have $\sigma^{-1} \bbz \in Z$, and so $\bbz \in \sigma Z$, showing that $\bbz \in U$. Similarly, $\bbw \in V$. 
Define the sets 
\begin{eqnarray*}
\calU\!\!\!&:=&\!\!\!\{[\bzeta]:\bzeta\in U\}\;\;\;\! (\owns [\bbz]),\textrm{ and}\\
\calV\!\!\!&:=&\!\!\!\{[\bbeta]:\bbeta\in V\}\;\;  (\owns [\bbw]).
\end{eqnarray*}
We claim that $\pi^{-1}\calU=U$. Firstly, if $\bzeta \in U$, then $[\bzeta]\in \calU$, that is, $\pi \bzeta \in \calU$, and so $\bzeta \in \pi^{-1}\calU$, showing that $U\subset \pi^{-1} \calU$. Secondly, if $\bzeta \in \pi^{-1}\calU$, then $\pi \;\!\bzeta \in \calU$, so that $[\bzeta]=\pi \;\!\bzeta=[\bbeta]$ for some $\bbeta \in U$, giving, for some $\sigma \in S_d$, that $\bzeta=\sigma \bbeta \in \sigma U=U$, showing that $\pi^{-1}\calU\subset U$ too. 

 Since $\pi^{-1} \calU=U$ and $\pi^{-1}\calV=V$ are open in $\overline{\mD}^d$, we conclude that  the sets  $\calU, \calV$ are open in $(\overline{\mD}^d/\!\sim,\tau_{\textrm{quot}})$.  
 
 Finally, we show that $\calU\cap \calV=\emptyset$. Suppose not. Then there exist $\bzeta\in U$ and $ \bbeta \in V$  such that $[\bzeta]=[\bbeta]$, that is, $\bzeta=\sigma \;\!\bbeta $ for some $\sigma \in S_d$. But $\bbeta \in U\subset Z$, and 
 $\sigma \;\!\bbeta \in \sigma V=V \subset W$, so that $\bzeta =\sigma \;\!\bbeta \in Z\cap W=\emptyset$, a contradiction. 
  \end{proof}

\noindent The {\em maximal ideal space} of a complex unital commutative Banach algebra $\calA$ is denoted by $M_\calA$, and it is the set of all nontrivial complex homomorphisms $\varphi:\calA\rightarrow \mC$. 
Let $\calL(\calA;\mC)$ be dual space of $\calA$, that is, the set of all continuous linear maps $\varphi:\calA\rightarrow \mC$. We equip $\calL(\calA;\mC)$ with the weak-$\ast$ topology (see e.g. \cite[\S3.14]{Rud}). 
Clearly $M_\calA\subset \calL(\calA;\mC)$. The {\em Gelfand topology} $\tau_{{\scaleobj{0.75}{\textrm{Gel}}}}$ on $M_\calA$ is the topology induced on $M_\calA$ from the weak-$\ast$ topology of $\calL(\calA;\mC)$. 
As the Gelfand topology need not be metrisable, we use the language of nets when discussing limits and convergence in the proof of Theorem~\ref{theorem_28_dec_2021_11:30}\footnote{But after proving this result, we see that the Gelfand topology of $M_{\scaleobj{0.81}{A_{\textrm{sym}}(\mD^d)}}$ is the quotient topology of ${\scaleobj{0.9}{\overline{\mD}^d/\!\sim}}$, and the latter can be seen to be metrisable (since the space ${\scaleobj{0.9}{\overline{\mD}^d}}$ is compact and metrisable, and the quotient space ${\scaleobj{0.9}{\overline{\mD}^d/\!\sim}}$ is Hausdorff; see \cite[\S23, 23K]{Wil}).\phantom{$\overline{\mD}^d/\!\sim$}}. In particular, we will use the notion of subnets in the sense of Willard \cite[\S11]{Wil} (see also \cite[\S1.3.B]{MorRup}). If $(x_i)_{i\in I}$ (where $I$ is a directed set) is a net in a topological space $X$, then a {\em subnet of} $(x_i)_{i\in I}$  
is a net $(x_{h(j)})_{j\in J}$, where $J$ is a directed set, and $h:J\rightarrow I$ is an {\em increasing cofinal function}, that is, it satisfies 

$\bullet$ (increasing) $h(j_1) \leq h(j_2)$ whenever $j_1\leq j_2$, and 

$\bullet$ (cofinal) for each $i\in I$, there is some $j\in J$ such that $i\leq h(j)$. 

\noindent This notion of a subnet affords us the following facts, which we shall need later (see \cite[\S11, \S17]{Wil}): 

$\bullet$ A subnet of a net convergent to $x$ also converges to $x$. 

$\bullet$ A net in a compact set $K\subset X$ possesses a subnet which converges 

\phantom{$\bullet$} to a point in $K$. 

\noindent We will now show that $(M_{A_{\textrm{sym}}(\mD^d)},
 \tau_{{\scaleobj{0.75}{\textrm{Gel}}}})$ can be identified as a topological space with $(\overline{\mD}^d/\!\sim,\tau_{\textrm{quot}})$. To do this, we will use the following:

\begin{lemma} 
\label{lemma_25_dec_2021_20:17}
Suppose that $\varphi, \psi \in M_{A_{\textrm{\em sym}}(\mD^d)}$. 

\noindent Then 
  $\varphi=\psi$ if and only if $\ker \varphi=\ker \psi$. 
  \end{lemma}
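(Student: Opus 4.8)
The plan is to dispatch the forward implication trivially and then prove the substantive direction, namely that equality of kernels forces equality of the homomorphisms, by the standard device of showing that a character on a unital commutative Banach algebra is completely determined by its kernel once one knows it fixes the identity. If $\varphi=\psi$ then obviously $\ker\varphi=\ker\psi$, so the only content is the converse. I would begin by assuming $\ker\varphi=\ker\psi=:K$ and aim to verify $\varphi(a)=\psi(a)$ for every $a\in A_{\textrm{sym}}(\mD^d)$.

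The first step is to record that a nontrivial complex homomorphism fixes the unit, i.e.\ $\varphi(\mathbf{1})=\psi(\mathbf{1})=1$. This follows because $\varphi(\mathbf{1})=\varphi(\mathbf{1}\cdot\mathbf{1})=\varphi(\mathbf{1})^2$ forces $\varphi(\mathbf{1})\in\{0,1\}$, and $\varphi(\mathbf{1})=0$ would give $\varphi(a)=\varphi(a\,\mathbf{1})=\varphi(a)\varphi(\mathbf{1})=0$ for all $a$, contradicting nontriviality; the identical argument applies to $\psi$. The second step is the decomposition trick: for any $a\in A_{\textrm{sym}}(\mD^d)$ write $a=\bigl(a-\varphi(a)\mathbf{1}\bigr)+\varphi(a)\mathbf{1}$. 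Since $\varphi\bigl(a-\varphi(a)\mathbf{1}\bigr)=\varphi(a)-\varphi(a)\varphi(\mathbf{1})=0$, the element $a-\varphi(a)\mathbf{1}$ lies in $\ker\varphi=K=\ker\psi$. Applying $\psi$ and using $\psi(\mathbf{1})=1$ then yields $0=\psi\bigl(a-\varphi(a)\mathbf{1}\bigr)=\psi(a)-\varphi(a)$, so $\psi(a)=\varphi(a)$. As $a$ was arbitrary, $\varphi=\psi$, completing the converse.

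I do not expect any genuine obstacle here: the statement is a general fact about characters and uses nothing about the symmetric structure of $A_{\textrm{sym}}(\mD^d)$ beyond its being a complex unital commutative Banach algebra. The only point that warrants a moment's care is the normalisation $\varphi(\mathbf{1})=1$, which is exactly where the nontriviality hypothesis on the homomorphisms is consumed; without it the claim fails for the zero functional. Everything else is the routine codimension-one observation that $K$ together with the line $\mC\,\mathbf{1}$ spans the algebra, so that a homomorphism is pinned down on each summand.
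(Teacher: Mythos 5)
Your proof is correct and is essentially the paper's own argument: the paper likewise sets $g:=f-\psi(f)\mathbf{1}$, notes $g\in\ker\psi=\ker\varphi$, and concludes $\varphi(f)=\psi(f)$, which is your decomposition trick with the roles of $\varphi$ and $\psi$ swapped and phrased as a contradiction rather than directly. Your explicit verification that $\varphi(\mathbf{1})=\psi(\mathbf{1})=1$ is a welcome detail the paper leaves implicit, but it does not change the route.
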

  \begin{proof}
  The `only if' part is trivial. `If' part: Let $\ker \varphi=\ker \psi$. Suppose $f\in A_{\textrm{sym}}(\mD^d)$ is such that $\varphi(f)\neq \psi (f)$. Set 
  $  g:=f-\psi(f) \mathbf{1}\in A_{\textrm{sym}}(\mD^d)$. Clearly $g\in \ker \psi$. As $\ker \varphi=\ker \psi$,  $\varphi(g)=0$ too, that is, $\varphi(f)= \psi (f)$, a contradiction. 
  \end{proof}
  
  \begin{lemma}
  \label{lemma_27_dec_2021_15:43}$\;$ 
  
  \noindent 
  If $\tau_1,\tau_2$ are topologies on a set $X$ such that 
  
  $\bullet$ $\tau_1\subset \tau_2$, 
  
  $\bullet$  $\tau_1$ is a Hausdorff topology, and 
  
  $\bullet$  $\tau_2$ is compact, 
  
  \noindent then $\tau_1=\tau_2$.
  \end{lemma}
  \begin{proof}
  This is precisely \cite[Theorem 1.53]{MorRup} or \cite[\S3.8(a)]{Rud}. 
  \end{proof}
  
\begin{theorem}
\label{theorem_28_dec_2021_11:30}$\;$ 

\noindent 
The maximal ideal space of $A_{\textrm{\em sym}}(\mD^d)$ is homeomorphic to $\overline{\mD}^d/\!\sim$. 
\end{theorem}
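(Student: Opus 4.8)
The plan is to exhibit an explicit homeomorphism given by point evaluation. For $\bbz\in\overline{\mD}^d$, let $\varphi_{\bbz}:A_{\textrm{sym}}(\mD^d)\to\mC$ be the evaluation $\varphi_{\bbz}(f)=f(\bbz)$; this is a nonzero complex homomorphism (since $\varphi_{\bbz}(\mathbf{1})=1$ and $|\varphi_{\bbz}(f)|\leq\|f\|_\infty$), so $\varphi_{\bbz}\in M_{A_{\textrm{sym}}(\mD^d)}$. As every $f\in A_{\textrm{sym}}(\mD^d)$ is symmetric, $\varphi_{\sigma\bbz}=\varphi_{\bbz}$ for all $\sigma\in S_d$, so $\bbz\mapsto\varphi_{\bbz}$ is constant on orbits and descends to a well-defined map
$$\Phi:\overline{\mD}^d/\!\sim\;\longrightarrow\; M_{A_{\textrm{sym}}(\mD^d)},\qquad \Phi([\bbz])=\varphi_{\bbz}.$$
First I would record that $\Phi$ is injective: if $[\bbz]\neq[\bbw]$, then the multisets $\{z_1,\cdots,z_d\}$ and $\{w_1,\cdots,w_d\}$ differ, so by the fundamental theorem on symmetric polynomials some elementary symmetric polynomial $e_k\in A_{\textrm{sym}}(\mD^d)$ satisfies $e_k(\bbz)\neq e_k(\bbw)$, whence $\varphi_{\bbz}\neq\varphi_{\bbw}$. (Equivalently one can argue via kernels using Lemma~\ref{lemma_25_dec_2021_20:17}.)

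The crux is surjectivity, and here I would exploit the corona theorem (Theorem~\ref{theorem_25_dec_2021_19:52}). Let $\varphi\in M_{A_{\textrm{sym}}(\mD^d)}$, and suppose for contradiction that $\varphi\neq\varphi_{\bbz}$ for every $\bbz\in\overline{\mD}^d$. Fixing such a $\bbz$, choose $h\in A_{\textrm{sym}}(\mD^d)$ with $\varphi(h)\neq h(\bbz)$ and set $f_{\bbz}:=h-\varphi(h)\mathbf{1}$; then $\varphi(f_{\bbz})=0$ but $f_{\bbz}(\bbz)\neq 0$, so by continuity $|f_{\bbz}|>0$ on an open neighbourhood $N_{\bbz}$ of $\bbz$ in $\overline{\mD}^d$. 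The family $\{N_{\bbz}\}$ covers the compact space $\overline{\mD}^d$, so finitely many $N_{\bbz_1},\cdots,N_{\bbz_m}$ suffice, with associated $f_1,\cdots,f_m\in\ker\varphi$ satisfying $|f_1(\bbw)|+\cdots+|f_m(\bbw)|>0$ for every $\bbw\in\overline{\mD}^d$; by compactness this sum is bounded below by some $\delta>0$ on $\overline{\mD}^d$, hence on $\mD^d$. Theorem~\ref{theorem_25_dec_2021_19:52} then yields $g_1,\cdots,g_m\in A_{\textrm{sym}}(\mD^d)$ with $f_1g_1+\cdots+f_mg_m=\mathbf{1}$, and applying $\varphi$ gives $1=\varphi(\mathbf{1})=\sum_i\varphi(f_i)\varphi(g_i)=0$, a contradiction. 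Hence $\Phi$ is onto.

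It remains to promote the continuous bijection $\Phi$ to a homeomorphism. To see that $\Phi$ is continuous I would note that $\Phi\circ\pi:\overline{\mD}^d\to M_{A_{\textrm{sym}}(\mD^d)}$ is $\bbz\mapsto\varphi_{\bbz}$, which is continuous for the Gelfand (weak-$\ast$) topology because $\bbz_i\to\bbz$ in $\overline{\mD}^d$ forces $f(\bbz_i)\to f(\bbz)$ for every $f\in A_{\textrm{sym}}(\mD^d)$, i.e.\ pointwise convergence of the evaluations; since $\pi$ is a quotient map, $\Phi$ is continuous. (Alternatively one argues directly in the quotient with nets: a net $[\bbz_i]\to[\bbz]$ is handled by passing to a convergent subnet of $(\bbz_i)$ in the compact set $\overline{\mD}^d$, identifying its limit with a representative of $[\bbz]$ via Proposition~\ref{proposition_27_dec_2021_16:18}, and invoking the subnet characterisation of convergence.) Finally, $M_{A_{\textrm{sym}}(\mD^d)}$ is Hausdorff in the Gelfand topology while $\overline{\mD}^d/\!\sim$ is compact; transporting the Gelfand topology to $\overline{\mD}^d/\!\sim$ along the bijection $\Phi$ produces a Hausdorff topology contained in $\tau_{\textrm{quot}}$ (by continuity of $\Phi$), and the two coincide by Lemma~\ref{lemma_27_dec_2021_15:43}. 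Therefore $\Phi$ is a homeomorphism.

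The main obstacle is surjectivity, i.e.\ showing that every character is an evaluation. The elegant resolution is to feed a compactness-driven, partition-of-unity-type construction into the corona theorem: the assumption that $\varphi$ avoids every evaluation point manufactures a finite corona data set lying entirely in $\ker\varphi$, whose B\'ezout identity $\varphi$ then cannot satisfy. The only other delicate point is continuity with respect to the a priori non-metrisable Gelfand topology, which is precisely why the net/subnet language is set up beforehand.
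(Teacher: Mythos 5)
Your proposal is correct, and its overall architecture coincides with the paper's: the same evaluation map $[\bbz]\mapsto\varphi_{\bbz}$, injectivity via symmetric polynomials separating orbits (the paper phrases this through the coefficients of $p_{\bbz}(X)=(X-z_1)\cdots(X-z_d)$, you through the elementary symmetric polynomials --- the same fact), surjectivity by feeding a compactness argument into the corona theorem (Theorem~\ref{theorem_25_dec_2021_19:52}), and the final upgrade from bijection to homeomorphism via the compact-versus-Hausdorff comparison of Lemma~\ref{lemma_27_dec_2021_15:43}. The one step where you genuinely diverge, to your advantage, is continuity: the paper proves $\tau_{{\scaleobj{0.75}{\textrm{Gel}}}}\subset\tau_{\textrm{quot}}$ by a hands-on net/subnet argument (taking a net $([\bbz_i])_{i\in I}$ in a Gelfand-closed set, extracting a convergent subnet of representatives in the compact set $\overline{\mD}^d$, and using Hausdorffness of the quotient to identify limits), whereas you observe that $\Phi\circ\pi:\bbz\mapsto\varphi_{\bbz}$ is weak-$\ast$ continuous because $\bbz\mapsto f(\bbz)$ is continuous for each fixed $f$, and then invoke the universal property of the quotient topology to conclude $\Phi$ is continuous. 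This is shorter and makes the net machinery the paper sets up beforehand unnecessary for this theorem (it survives only as your parenthetical alternative). Two further small simplifications in your write-up are also sound: you build the separating function $f_{\bbz}=h-\varphi(h)\mathbf{1}\in\ker\varphi$ with $f_{\bbz}(\bbz)\neq 0$ directly, avoiding the detour through Lemma~\ref{lemma_25_dec_2021_20:17} on kernels (which in the paper implicitly also uses maximality to pass from $\ker\varphi\neq\ker\varphi_{[\bbz]}$ to the existence of such an $f_{\bbz}$), and you derive the final contradiction by applying $\varphi$ to the B\'ezout identity rather than citing maximality of $\mathfrak{m}$ --- equivalent, but marginally more self-contained.
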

\begin{proof}  {\bf The map $\iota:\overline{\mD}^d/\!\sim \;\! \rightarrow  M_{A_{\textrm{sym}}(\mD^d)}$:}  

\noindent For $[\bbz] \in \overline{\mD}^d/\!\sim$ (where $\bbz \in \overline{\mD}^d$), 
 we define $\varphi_{[\bbz]}:A_{\textrm{sym}}(\mD^d)\rightarrow \mC$ to be the point evaluation at $\bbz$, that is, 
 $$
 \varphi_{[\bbz]}(f)=f(\bbz), \quad f\in A_{\textrm{sym}}(\mD^d).
 $$
 Then $\varphi_{[\bbz]}$ is well-defined (since $A_{\textrm{sym}}(\mD^d)$ consists of {\em symmetric} functions), and defines a complex homomorphism. Also, $\varphi_{[\bbz]}(\mathbf{1})=1$. 
We show below that the map 
$$
\begin{array}{rccc} 
\iota: &\overline{\mD}^d/\!\sim & \rightarrow & M_{A_{\textrm{sym}}(\mD^d)} \\[0.1cm]
 & [\bbz]& \mapsto & \varphi_{[\bbz]} 
 \end{array}
 $$
 is a homeomorphism. 
 
 \goodbreak
 
 \noindent {\bf The map $\iota$ is injective:} Suppose that 
 $$
 [\bbz]\neq [\bbw]
 $$
  for some $\bbz,\bbw\in \overline{\mD}^d$. Then for all $\sigma \in S_d$, we have $\sigma\;\! \bbz \neq \bbw$. Thus the polynomials $p_\bbz,p_\bbw\in \mC[X]$ defined by 
\begin{eqnarray*}
p_\bbz(X)\!\!\! &:=&\!\!\!(X-z_1)\cdots (X-z_d), \\
p_\bbw(X)\!\!\!&:=&\!\!\!(X-w_1)\cdots(X-w_d),
\end{eqnarray*}
 are distinct. As the coefficients of a polynomial are symmetric polynomials of the zeroes of the polynomial, there exists a symmetric polynomial $f$ such that $f(\bbz)\neq f(\bbw)$, that is, 
 $$
 \varphi_{[\bbz]}(f)\neq \varphi_{[\bbw]}(f).
 $$ 
 
 \smallskip

 \noindent 
 {\bf The map $\iota$ is surjective:}   Let $\varphi\in M_{A_{\textrm{sym}}(\mD^d)}$ be such that 
  $$
\textrm{ for all } \bbz \in \overline{\mD}^d, \;\varphi \neq \varphi_{[\bbz]}.
 $$
 Let $\mathfrak{m}$ be the maximal ideal in $A_{\textrm{sym}}(\mD^d)$ given by $\mathfrak{m}:=\ker \varphi$. 
 
 \noindent 
  By Lemma~\ref{lemma_25_dec_2021_20:17},  
  $$
  (\mathfrak{m}=)\;\ker \varphi \neq \ker \varphi_{[\bbz]}\textrm{ for all }\bbz \in \overline{\mD}^d.
  $$
   Thus for each $\bbz \in  \overline{\mD}^d$, 
 there exists an $f_\bbz\in A_{\textrm{sym}}(\mD^d)$ such that 
 $$
 f_\bbz \in \mathfrak{m},\textrm{ but }f_\bbz(\bbz)\neq 0.
 $$ 
 By the compactness of $\overline{\mD}^d$,  there is an $n\in \mN$ and 
  $f_1,\cdots, f_n\in A_{\textrm{sym}}(\mD^d)$ such that 
  $$
  \forall \bbz \in \overline{\mD}^d, 
 \;\exists k\in \{1,\cdots, n\}\textrm{ such that } 
 f_k(\bbz)\neq 0.
 $$
  Hence there exists a $\delta>0$ such that 
 for all $\bbz \in \mD^d$, 
 $$
 |f_1(\bbz)|+\cdots +|f_n(\bbz)|\geq \delta.
 $$
  By the corona theorem for $A_{\textrm{sym}}(\mD^d)$ (Theorem~\ref{theorem_25_dec_2021_19:52} above),  
 there exist $g_1,\cdots, g_n\in A_{\textrm{sym}}(\mD^d)$ such that 
  $
 f_1g_1+\cdots+f_ng_n=\mathbf{1}.
 $  
 But as the element $f_1g_1+\cdots +f_ng_n$ belongs to the ideal $\mathfrak{m}$, we get $\mathbf{1}\in \mathfrak{m}$, contradicting the maximality of $\mathfrak{m}$. 
 
 \smallskip 
 
 \noindent 
 {\bf The map $\iota$ is a homeomorphism:} 
 
 \noindent 
 Via the bijection $\iota\!:\!\overline{\mD}^d\!/\!\!\sim\;\!\rightarrow  \!M_{A_{\textrm{sym}}(\mD^d)}$, we identify $M_{A_{\textrm{sym}}(\mD^d)}$ and $\overline{\mD}^d\!/\!\sim$. 
 
 \noindent Then showing $\iota$ is a homeomorphism is tantamount to showing that  the topologies $\tau_{\textrm{quot}}$ and $ \tau_{{\scaleobj{0.75}{\textrm{Gel}}}}$ coincide, in view of the above identification. 
To show the coincidence of these two topologies, will use Lemma~\ref{lemma_27_dec_2021_15:43}, 
with $\tau_1= \tau_{{\scaleobj{0.75}{\textrm{Gel}}}}$ and $\tau_2=\tau_{\textrm{quot}}$. 
We want to show that $ \tau_{{\scaleobj{0.75}{\textrm{Gel}}}}\subset \tau_{\textrm{quot}}$. It is enough to show that if $F$ is closed in $ \tau_{{\scaleobj{0.75}{\textrm{Gel}}}}$, then it is closed in $\tau_{\textrm{quot}}$. Suppose that $F$ is closed in 
$ \tau_{{\scaleobj{0.75}{\textrm{Gel}}}}$. Let  $([\bbz_i])_{i\in I}$ (where $I$ is a directed set) be a net in $F$ that is convergent with limit $[\bbz]$  in $(\overline{\mD}^d /\!\sim,\tau_{\textrm{quot}}) $. As $(\bbz_i)_{i\in I}$ is a net in the compact set $\overline{\mD}^d$, it has a convergent subnet $(\bbz_{h(j)})_{j\in J}$ (where $J$ is a directed set and $h:J\rightarrow I$
\phantom{$\overline{\mD}^d_{h(j)})_{j\in J}$}\!\!\!\!\!\!\!\!\!\!\!\!\!\!\!\!\!\!\!\!\!\!\!
 is a monotone final function), convergent with a limit, say $\bbw$ in $\overline{\mD}^d$ (in the Euclidean topology).
 Then the net $([\bbz_{h(i)}])_{j\in J}$ converges to $[\bbw]$ in $(\overline{\mD}^d /\!\sim,\tau_{\textrm{quot}})$. But as $([\bbz_{h(i)}])_{j\in J}$ is a subnet of the convergent net $([\bbz_i])_{i\in I}$
  with limit $[\bbz]$ in $(\overline{\mD}^d /\!\sim,\tau_{\textrm{quot}})$, and since $(\overline{\mD}^d /\!\sim,\tau_{\textrm{quot}})$ is Hausdorff (Proposition~\ref{proposition_27_dec_2021_16:18}), we get 
  $$
  [\bbz]=[\bbw].
  $$
   As $(\bbz_{h(j)})_{j\in J}$ converges to $\bbw$ in $\overline{\mD}^d$, we have, 
   \phantom{$\overline{\mD}^d_{h(j)})_{j\in J}$}\!\!\!\!\!\!\!\!\!\!\!\!\!\!\!\!\!\!\!\!\!\!\!
   for all $f\in A_{\textrm{sym}}(\mD^d)$, that 
$(f(\bbz_{h(j)}))_{j\in J}$ converges to $f(\bbw)$. 
\phantom{$\overline{\mD}^d_{h(j)})_{j\in J}$}\!\!\!\!\!\!\!\!\!\!\!\!\!\!\!\!\!\!\!\!\!\!\!
Hence $(\varphi_{[\bbz_{h(j)}]} (f))_{j\in J}$ converges to $\varphi_{[\bbw]}f $ for all $f\in A_{\textrm{sym}}(\mD^d)$. 
\phantom{$\overline{\mD}^d_{h(j)})_{j\in J}$}\!\!\!\!\!\!\!\!\!\!\!\!\!\!\!\!\!\!\!\!\!\!\!
Thus 
$(\varphi_{[\bbz_{h(j)}]})_{j\in J}$ in $F$ converges to $\varphi_{[\bbw]}$ in $(M_{A_{\textrm{sym}}(\mD^d)},\tau_{\textrm{Gel}})$. 
\phantom{$\overline{\mD}^d_{h(j)})_{j\in J}$}\!\!\!\!\!\!\!\!\!\!\!\!\!\!\!\!\!\!\!\!\!\!\!
But as $F$ was closed in $\tau_{{\scaleobj{0.75}{\textrm{Gel}}}}$, we have $[\bbw]\in F$. As $[\bbz]=[\bbw]$, we obtain $[\bbz]\in F$. 
\phantom{$\overline{\mD}^d_{h(j)})_{j\in J}$}\!\!\!\!\!\!\!\!\!\!\!\!\!\!\!\!\!\!\!\!\!\!\!
Consequently, $F$ is closed in $\tau_{\textrm{quot}}$ too. 
By Lemma~\ref{lemma_27_dec_2021_15:43}, it follows that $\iota$ is a homeomorphism. 
\phantom{$\overline{\mD}^d_{h(j)})_{j\in J}$}\!\!\!\!\!\!\!\!\!\!\!\!\!\!\!\!\!\!\!\!\!\!\!
 \end{proof}

\section{Contractibility of $M_{A_{\mathrm{sym}}(\mD^d)}$ and its consequences}

\noindent Recall that a topological space $X$ is {\em contractible} if the identity map is null-homotopic, that is, there exists an $x_0\in X$ and a continuous map $H:[0,1]\times X\rightarrow X$ such that 

\smallskip 

\noindent $\;\;\bullet $ $H(0,\cdot)=\textrm{id}_X$ (the identity map 
$\textrm{id}:X\rightarrow X$, $X\owns x\mapsto x$) and 

\smallskip 

\noindent $\;\;\bullet$ $H(1,x)=x_0$ for all $x\in X$ (the constant map $X\owns x\mapsto x_0$). 

\medskip 

\noindent Here $[0,1]\times X$ is given the product topology. 

\begin{theorem}
\label{theorem_28_dec_2021_18:35}
$(M_{A_{\mathrm{sym}}(\mD^d)},\tau_{{\scaleobj{0.75}{\mathrm{Gel}}}})$ is contractible.
\end{theorem}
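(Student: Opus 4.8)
The plan is to invoke Theorem~\ref{theorem_28_dec_2021_11:30}, which identifies $(M_{A_{\textrm{sym}}(\mD^d)},\tau_{{\scaleobj{0.75}{\textrm{Gel}}}})$ homeomorphically with $(\overline{\mD}^d/\!\sim,\tau_{\textrm{quot}})$, so that it suffices to prove that the quotient space $\overline{\mD}^d/\!\sim$ is contractible. The underlying idea is that the obvious straight-line contraction of $\overline{\mD}^d$ to the origin is compatible with the $S_d$-action, and hence descends to the quotient.

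First I would define $\widetilde{H}\colon[0,1]\times\overline{\mD}^d\rightarrow\overline{\mD}^d$ by $\widetilde{H}(t,\bbz)=(1-t)\bbz$. Since $|(1-t)z_j|\leq|z_j|\leq 1$, this indeed maps into $\overline{\mD}^d$, and it is continuous with $\widetilde{H}(0,\bbz)=\bbz$ and $\widetilde{H}(1,\bbz)=\mathbf{0}$. The key algebraic observation is that scaling commutes with permuting coordinates: for every $\sigma\in S_d$, $(1-t)(\sigma\bbz)=\sigma((1-t)\bbz)$. Consequently the prescription $H(t,[\bbz]):=[(1-t)\bbz]$ is well defined on $[0,1]\times(\overline{\mD}^d/\!\sim)$: if $[\bbz]=[\bbw]$, say $\bbw=\sigma\bbz$, then $(1-t)\bbw=\sigma((1-t)\bbz)$, whence $[(1-t)\bbw]=[(1-t)\bbz]$. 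Evaluating at the endpoints gives $H(0,[\bbz])=[\bbz]$ and $H(1,[\bbz])=[\mathbf{0}]$, so $H$ witnesses a homotopy from the identity to the constant map with value the single point $[\mathbf{0}]=\pi(\mathbf{0})$.

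The one genuinely nontrivial point, which I expect to be the main obstacle, is the continuity of $H$. Consider the continuous map $G:=\pi\circ\widetilde{H}\colon[0,1]\times\overline{\mD}^d\rightarrow\overline{\mD}^d/\!\sim$, $G(t,\bbz)=[(1-t)\bbz]$. By construction $G$ is constant on the fibres of $\mathrm{id}_{[0,1]}\times\pi\colon[0,1]\times\overline{\mD}^d\rightarrow[0,1]\times(\overline{\mD}^d/\!\sim)$, and $H$ is precisely the induced map, i.e.\ $H\circ(\mathrm{id}_{[0,1]}\times\pi)=G$. To conclude that $H$ is continuous it therefore suffices to know that $\mathrm{id}_{[0,1]}\times\pi$ is itself a quotient map. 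This is exactly the content of Whitehead's theorem: the product of a quotient map with the identity map of a locally compact Hausdorff space is again a quotient map, and $[0,1]$ is locally compact Hausdorff. Granting this, $H$ is continuous, and the proof that $\overline{\mD}^d/\!\sim$, and therefore $M_{A_{\textrm{sym}}(\mD^d)}$, is contractible is complete.

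As a self-contained alternative that sidesteps Whitehead's theorem, one can exploit the metrisability of $\overline{\mD}^d/\!\sim$ recorded in the footnote to Theorem~\ref{theorem_28_dec_2021_11:30} and verify the continuity of $H$ sequentially. Even more concretely, one can embed $\overline{\mD}^d/\!\sim$ homeomorphically onto its image under the map $[\bbz]\mapsto(e_1(\bbz),\cdots,e_d(\bbz))$, where $e_1,\cdots,e_d$ are the elementary symmetric polynomials; this map is well defined and continuous by the quotient universal property, injective because the $e_k$ determine the multiset of roots (hence the $S_d$-orbit), and a homeomorphism onto its image by compactness of $\overline{\mD}^d/\!\sim$. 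On the image the contraction takes the transparently jointly continuous form $(c_1,\cdots,c_d)\mapsto((1-t)c_1,\cdots,(1-t)^dc_d)$, which stays in the image by homogeneity of the $e_k$, since $(1-t)^ke_k(\bbz)=e_k((1-t)\bbz)$.
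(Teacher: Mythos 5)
Your proposal is correct, and it uses the same homotopy as the paper --- $H(t,[\bbz])=[(1-t)\bbz]$, with the identical well-definedness check via $\sigma((1-t)\bbz)=(1-t)\sigma\bbz$ --- but it handles the one nontrivial step, continuity of $H$, by a genuinely different route. The paper proves continuity by hand: given a neighbourhood $\calV$ of $[(1-t)\bbz]$, it sets $Z:=\pi^{-1}\calV$, applies continuity of $\Phi(t,\bzeta)=(1-t)\bzeta$ to get neighbourhoods $I_\sigma\times W_\sigma$ for each $\sigma\in S_d$, and then symmetrises, forming $U=\bigcap_{\sigma\in S_d}\sigma W$ so that $\pi^{-1}(\pi U)=U$ and $\pi U$ is open in the quotient; this is, in effect, a bare-hands verification in this special case of exactly the general fact you invoke, namely that $\mathrm{id}_{[0,1]}\times\pi$ is a quotient map. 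Your appeal to Whitehead's theorem (a quotient map times the identity of a locally compact Hausdorff space is a quotient map, and $[0,1]$ certainly qualifies) is shorter and more conceptual, and it would work verbatim for any continuous contraction of $\overline{\mD}^d$ commuting with the $S_d$-action; the cost is reliance on an external theorem the paper's self-contained argument avoids citing. Your third route, embedding $\overline{\mD}^d/\!\sim$ via the elementary symmetric polynomials $[\bbz]\mapsto(e_1(\bbz),\cdots,e_d(\bbz))$ (a homeomorphism onto its image since the domain is compact and, by the paper's Proposition~\ref{proposition_27_dec_2021_16:18}, Hausdorff), is also sound and has independent interest: it realises the quotient concretely as the symmetrised polydisc, where the contraction becomes the manifestly jointly continuous polynomial map $(c_1,\cdots,c_d)\mapsto((1-t)c_1,\cdots,(1-t)^dc_d)$, which preserves the image by the homogeneity $e_k((1-t)\bbz)=(1-t)^ke_k(\bbz)$. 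Only your sequential-continuity aside is left as a sketch (note it needs metrisability of the product $[0,1]\times(\overline{\mD}^d/\!\sim)$, which is fine since finite products of metrisable spaces are metrisable), but since your Whitehead argument is complete this does not affect the proof.
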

\begin{proof} It is enough to show that $(\overline{\mD}^d/\!\sim,\tau_{\textrm{quot}})$ is contractible. 
We show that the identity map on $\overline{\mD}^d/\!\sim$ is homotopic to the constant map taking value $[\mathbf{0}]$  everywhere. Define $H:[0,1]\times (\overline{\mD}^d/\!\sim)\rightarrow \overline{\mD}^d/\!\sim$ by 
$$
H(t,[\bbz])=[(1-t)\bbz],\quad \bbz\in \overline{\mD}^d.
$$
The map $H$ is well-defined. Indeed, if $[\bbz]=[\bbw]$ for some $\bbz,\bbw\in \overline{\mD}^d$, 
then $\bbz\sim \bbw$, that is, there exists a $\sigma \in S_d$ such that $\sigma \;\! \bbz =\bbw$, and so    for $t\in [0,1]$, 
$$
\sigma ((1-t) \bbz)=(1-t)\bbw,
$$
 showing that $(1-t)\bbz\sim( (1-t)\bbw)$, that is, 
$[(1-t)\bbz]=[(1-t)\bbw]$. 

\noindent Clearly, $H(0,[\bbz])=[\bbz]$ for all $\bbz\in \overline{\mD}^d$, and so 
$$
H(0,\cdot)=\textrm{id}_{\overline{\mD}^d/\!\sim}.
$$ 
Moreover, $H(1,[\bbz])=[\mathbf{0}]$  for all $\bbz\in \overline{\mD}^d$, and so 
$H(1,\cdot)$ is the constant map on $\overline{\mD}^d/\!\sim$, taking value $[\mathbf{0}]$ everywhere on $\overline{\mD}^d/\!\sim$. 

Finally, we show the continuity of $H$. Let $\bbz\in  \overline{\mD}^d$, $t\in [0,1]$, and 
$\calV$ be a neighbourhood of $[(1-t)\bbz]$. Set 
$$
Z:=\pi^{-1} \calV\subset \overline{\mD}^d.
$$
 Then $Z$ is an open set containing the finite set $\{(1-t)\sigma \bbz:\sigma \in S_d\}$. 
 
 \noindent As the map 
$$
\begin{array}{rccc} 
\Phi: &[0,1]\times \overline{\mD}^d & \rightarrow & \overline{\mD}^d \\[0.1cm]
 &\;\; (t,\bzeta)& \mapsto & (1-t)\bzeta  
 \end{array}
 $$
is continuous, for each $\sigma \in S_d$, there exists an open neighbourhood $I_\sigma$ of $t$ in $[0,1]$, 
and an open neighbourhood $W_\sigma $ of $\bbz$ in $\overline{\mD}^d$ such that 
$$
\Phi(I_\sigma \times W_\sigma)\subset Z.
$$
 Set 
$$
W:=\bigcup_{\sigma \in S_d} W_\sigma, \;\;\textrm{ and }\;\;
I:=\bigcap_{\sigma \in S_d} I_\sigma.
$$ 
Then 
$$
\Phi(I\times W)\subset Z.
$$
 Let 
$$
U=\bigcap_{\sigma \in S_d} \sigma W.
$$
Then $U$ is open in $\overline{\mD}^d$, and for all $\sigma \in S_d$, we have 
$$
\sigma \;\!U=U.
$$ 
Using the fact that for all $\sigma \in S_d$ we have $\sigma \bbz \in W$, it can be seen that $\bbz \in U$. 
Define 
$$
\calU:=  \pi U:=\{[\bzeta]: \bzeta \in U\}.
$$
 Then 
 $$
 \pi^{-1} \calU=U.
 $$
  So $\calU$ is open in $\overline{\mD}^d/\!\sim$. As $\bbz \in U$, we have $[\bbz]\in \calU$. Hence $I\times \calU$ is an open set in $[0,1]\times (\overline{\mD}^d/\!\sim)$ containing $(t,[\bbz])$ such that $H(I\times \calU)\subset \calV$. 
Consequently, $H$ is continuous. \phantom{$\overline{\mD}^d$}
\end{proof}

\subsection{Applications}

\noindent  A commutative unital ring $R$ is {\em projective-free} if every
 finitely generated projective $R$-module is free. 
Recall that if $M$ is a finitely generated
  $R$-module, then

\smallskip 

\noindent $\;\;\bullet$  $M$ is called {\em free} if $M \simeq R^k$ for some integer $k\geq 0$;

 \smallskip 

\noindent $\;\;\bullet$ $M$ is called {\em projective} if there exists an $R$-module $N$
  and an integer $\phantom{\;\;\bullet}$ $ m\geq 0$ such that $M\oplus  N = R^m$.

\medskip 

\noindent 
For $m,n\in \mN$, $R^{m\times n}$ denotes the set of matrices with $m$ rows and $n$ columns having entries from $R$. The identity element in $R^{k\times k}$ having diagonal elements $1$,  and zeroes elsewhere will be denoted by $I_k$. 
In terms of matrices (see \cite[Proposition~2.6]{Coh} or
\cite[Lemma~2.2]{BalRodSpi}), the ring $R$ is projective-free if and
only if every idempotent matrix $P$ is conjugate (by an invertible
matrix $S$) to a diagonal matrix with elements $1$ and $0$ on the diagonal,
that is, for every $m\in \mN$ and every $P\in R^{m\times m}$
satisfying $P^2=P$, there exists an $S\in R^{m\times m}$ such that $S$
is invertible as an element of $R^{m\times m}$, and for some $k\geq 0$, 
$$
S^{-1} P S=\left[\begin{array}{cc}
I_k & 0\\
0 & 0
\end{array}\right].
$$
In 1976, it was shown independently by Quillen and Suslin,  that if $\mF$ is a field, then the polynomial
ring $\mF[x_1, \dots , x_n]$  is
projective-free, settling Serre's conjecture from 1955 (see \cite{Lam}). 
In the context of a commutative semisimple unital complex Banach
algebra $\calA$, \cite[Corollary~1.4]{BS} says that the
contractibility of the maximal ideal space $M_\calA$ in the Gelfand topology, suffices for $\calA$ to be projective-free. Theorem~\ref{theorem_28_dec_2021_18:35} gives: 

\begin{corollary}
$A_{\textrm{\em sym}}(\mD^d)$ is a projective-free ring. 
\end{corollary}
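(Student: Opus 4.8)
The plan is to obtain the corollary as a direct application of \cite[Corollary~1.4]{BS}, which asserts that any commutative semisimple unital complex Banach algebra whose maximal ideal space is contractible in the Gelfand topology is projective-free. Three of the four hypotheses are immediate from the setup: $A_{\textrm{sym}}(\mD^d)$ is a unital commutative complex Banach algebra (with unit $\mathbf{1}$, as recorded in the introduction), and Theorem~\ref{theorem_28_dec_2021_18:35} already supplies the contractibility of $(M_{A_{\textrm{sym}}(\mD^d)},\tau_{\textrm{Gel}})$. So the work reduces to checking the remaining hypothesis, namely semisimplicity.

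To verify semisimplicity I would show that the Jacobson radical, i.e. the intersection of the kernels of all characters $\varphi\in M_{A_{\textrm{sym}}(\mD^d)}$, is trivial. Here I would invoke the description of the maximal ideal space established in Theorem~\ref{theorem_28_dec_2021_11:30}: every character is a point evaluation $\varphi_{[\bbz]}$ for some $\bbz\in\overline{\mD}^d$. Consequently, if $f$ lies in every maximal ideal, then $f(\bbz)=\varphi_{[\bbz]}(f)=0$ for all $\bbz\in\overline{\mD}^d$, and hence $f=0$. Thus the radical is $\{0\}$, the algebra is semisimple, and \cite[Corollary~1.4]{BS} applies to give that $A_{\textrm{sym}}(\mD^d)$ is projective-free.

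As for the main obstacle, there is essentially none beyond correctly matching the hypotheses of the cited result, since the substantive content (the identification of the maximal ideal space and the construction of the contracting homotopy) has already been carried out in the preceding theorems. The only genuine verification is semisimplicity, and even that reduces to the separation statement that point evaluations at the points of $\overline{\mD}^d$ distinguish the elements of $A_{\textrm{sym}}(\mD^d)$ — a fact already implicit in the surjectivity argument of Theorem~\ref{theorem_28_dec_2021_11:30}.
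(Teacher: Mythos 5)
Your proposal is correct and follows essentially the same route as the paper, which likewise obtains the corollary by applying \cite[Corollary~1.4]{BS} to the contractibility statement of Theorem~\ref{theorem_28_dec_2021_18:35}. The only difference is that you make the semisimplicity hypothesis explicit; the paper leaves it implicit, and indeed it needs none of the machinery of Theorem~\ref{theorem_28_dec_2021_11:30}, since $A_{\textrm{sym}}(\mD^d)$ is an algebra of functions on $\overline{\mD}^d$ in which the point evaluations $\varphi_{[\bbz]}$ are already characters separating elements, so the Jacobson radical is trivially $\{0\}$.
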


\noindent 
The study of Serre's conjecture and algebraic $K$-theory naturally led to the notion of Hermite rings. A commutative unital ring $R$ is 
{\em Hermite} if 
every finitely generated stably free $R$-module is free. 
(A finitely generated $R$-module $M$  is {\em stably free}  if there exist free finitely generated $R$-modules $F$ and $G$  such that $M \oplus F =G$.)  
 Clearly, every projective-free ring is Hermite. In
terms of matrices, $R$ is Hermite if and only if left-invertible tall matrices over $R$ can be completed to invertible ones (see e.g.  \cite[p.VIII]{Lam}, \cite[p.1029]{Tol}):

\smallskip 

\noindent 
\fbox{\!\!\!
  $\begin{array}{llll} \textrm{for all }k\in \mN \textrm{ and }K\in
    \mN\textrm{ such that }k<K,\textrm{ and}
     \\
     \textrm{for all }f\in R^{K\times k}\textrm{ such that there
     exists a } g\in R^{k\times K} \textrm{ so that } gf=I_k,
     \\
     \textrm{there exists an }f_c\in R^{K\times (K-k)}\textrm{ and there  exists a } G\in R^{K\times K}\\
     \textrm{such that } G\left[\begin{array}{c|c}f &
                                                      f_c\end{array}\right]=I_K.
\end{array}\!\!$}

\smallskip 

\begin{corollary}
$A_{\textrm{\em sym}}(\mD^d)$ is a Hermite ring. 
\end{corollary}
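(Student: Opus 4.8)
The plan is to obtain this as an immediate consequence of the preceding corollary, using the elementary ring-theoretic implication that every projective-free ring is Hermite. The serious analytic work has already been carried out: the corona theorem (Theorem~\ref{theorem_25_dec_2021_19:52}), the identification of $M_{A_{\textrm{sym}}(\mD^d)}$ with $\overline{\mD}^d/\!\sim$ (Theorem~\ref{theorem_28_dec_2021_11:30}), and its contractibility (Theorem~\ref{theorem_28_dec_2021_18:35}), which together feed into the projective-freeness conclusion via \cite[Corollary~1.4]{BS}. Consequently, for the Hermite property no further analysis is needed, and the argument is purely formal.

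Concretely, I would argue at the level of modules. Suppose $M$ is a finitely generated stably free $A_{\textrm{sym}}(\mD^d)$-module, so that $M\oplus F=G$ for some finitely generated free $A_{\textrm{sym}}(\mD^d)$-modules $F$ and $G$. Then $M$ is a direct summand of the free module $G$, and hence $M$ is projective. Since $A_{\textrm{sym}}(\mD^d)$ is projective-free by the preceding corollary, this finitely generated projective module $M$ must in fact be free. As $M$ was an arbitrary stably free module, it follows that every finitely generated stably free module over $A_{\textrm{sym}}(\mD^d)$ is free, which is precisely the assertion that the ring is Hermite. (Equivalently, one could phrase the conclusion through the boxed matrix characterisation preceding the statement, completing left-invertible tall matrices to invertible ones, but the module-theoretic route is cleaner and avoids an explicit construction.)

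The only point requiring verification is the observation that a stably free module is projective, and this is immediate from the definition of projectivity as being a direct summand of a free module. There is thus no genuine obstacle at this stage: the entire difficulty has already been absorbed into establishing projective-freeness, and the present corollary records nothing more than the standard fact ``projective-free $\Rightarrow$ Hermite'' already noted in the text preceding the statement.
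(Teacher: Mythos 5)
Your proposal is correct and matches the paper exactly: the paper derives this corollary solely from the preceding projective-freeness corollary together with the standard implication (noted in the text) that every projective-free ring is Hermite, which your module-theoretic argument (stably free $\Rightarrow$ direct summand of a free module $\Rightarrow$ projective $\Rightarrow$ free) merely spells out.
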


\goodbreak 

\section{$\SL_n(A_{\mathrm{sym}}(\mD^d))=\E_n(A_{\mathrm{sym}}(\mD^d))$}

\noindent 
Let $R$ be a commutative unital ring with multiplicative identity $1$ and additive identity element $0$. Let $n\in \mN$.

 \noindent $\bullet$   Let $\textrm{GL}_n(R)$ denote the {\em general linear group} of all invertible matrices \phantom{$\bullet$} in $R^{\;\!n\times n}$
 (with matrix 
 multiplication). 

\noindent $\bullet$ 
 The {\em special linear group} $\SL_n(R)$ denotes the subgroup of $\textrm{GL}_n(R)$ of 
 \phantom{$\bullet$} all matrices $M$  whose 
  determinant $\det M=1$. 
  
\noindent $\bullet$  An {\em elementary matrix} 
$E_{ij}(\alpha)$ over $R$ has the form $E_{ij}=I_n+\alpha \;\!\bbe_{ij}$, 
 \phantom{$\bullet$} 
 \phantom{$\bullet$} where $i\neq j$, $\alpha \in R$, and $\bbe_{ij}$ is the $n\times n$ matrix whose entry in the 
\phantom{$\bullet$}  
\phantom{$\bullet$} $i$th row and $j$th column is $1$, and all the other entries of $\bbe_{ij}$ are zeros. 

\medskip

\noindent 
 $\E_n(R)$ is the subgroup of $\SL_n(R)$ generated by the elementary 
 matrices. A classical question in algebra is: For all $n\in \mN$, is $\SL_n(R)=\E_n(R)$? 
The answer depends on the ring $R$. For Banach algebras, the following result is known  \cite[\S7]{Mil}:
 
 \begin{proposition}
  \label{prop_11_may_2021_18:36} $\;$
 
 \noindent 
 Let $\calA$ be a commutative unital Banach algebra$,$ $n\in \mN,$ $M\in \eSL_n(\calA)$. Then the following are equivalent:
 \begin{itemize}
 \item $M\in \eE_n (\calA)$.
 \item $M$ is null-homotopic.
 \end{itemize}
 \end{proposition}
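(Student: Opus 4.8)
The plan is to read ``$M$ is null-homotopic'' as ``$M$ lies in the same path-component of $\SL_n(\calA)$ as the identity $I_n$,'' i.e.\ there is a continuous path in $\SL_n(\calA)$ joining $M$ to $I_n$, and then to show that this identity path-component is exactly $\E_n(\calA)$. The implication $M\in \E_n(\calA)\Rightarrow M$ null-homotopic is the easy one: each elementary generator $E_{ij}(\alpha)=I_n+\alpha\;\!\bbe_{ij}$ is joined to $I_n$ by the continuous path $t\mapsto I_n+t\alpha\;\!\bbe_{ij}$ ($t\in[0,1]$) lying wholly in $\E_n(\calA)\subset \SL_n(\calA)$. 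Writing $M$ as a finite product of such generators and concatenating (i.e.\ pointwise-multiplying, using continuity of multiplication in $\SL_n(\calA)$) the corresponding paths produces a path from $M$ to $I_n$.

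For the converse, the key structural fact I would establish is that $\E_n(\calA)$ is an \emph{open} subgroup of $\SL_n(\calA)$. Since it is a subgroup, it suffices to show it contains a neighbourhood of $I_n$. To that end I would take $M=I_n+N$ with $\|N\|$ small; then the $(1,1)$ entry $1+n_{11}$ is close to $1$, hence invertible in $\calA$ because the units of a Banach algebra form an open set. One can therefore clear the first column and row of $M$ by elementary row and column operations, i.e.\ by left and right multiplication by elements of $\E_n(\calA)$; a quantitative estimate shows the surviving $(n-1)\times(n-1)$ block is again within $O(\|N\|)$ of $I_{n-1}$, so the reduction iterates. This reduces $M$, modulo $\E_n(\calA)$ on both sides, to a diagonal matrix $\mathrm{diag}(u_1,\dots,u_n)$ of units; since elementary operations preserve the determinant and $\det M=1$, we get $u_1\cdots u_n=1$. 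The Whitehead lemma (built from $\mathrm{diag}(u,u^{-1})\in\E_2(\calA)$ applied in successive $2\times2$ blocks) then shows any such unimodular diagonal matrix is itself in $\E_n(\calA)$, whence $M\in \E_n(\calA)$.

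Granting that $\E_n(\calA)$ is open, it is automatically closed, since its complement is a union of cosets, each a translate of an open set and hence open; thus $\E_n(\calA)$ is a clopen subset of $\SL_n(\calA)$ containing $I_n$. If now $M$ is null-homotopic, choose a continuous path from $M$ to $I_n$; its image is connected and meets $\E_n(\calA)$ at $I_n$. Were the path to exit $\E_n(\calA)$, the preimages of $\E_n(\calA)$ and of its complement would be two disjoint nonempty open sets covering $[0,1]$, contradicting connectedness. Hence the entire path, and in particular $M$, lies in $\E_n(\calA)$, completing the equivalence.

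I expect the main obstacle to be the neighbourhood claim of the second paragraph: one must control the Gaussian elimination quantitatively, ensuring that every successive pivot stays invertible and that each reduced block remains inside the region where invertibility of pivots is guaranteed. Because $n$ is fixed this is only finitely many steps, each perturbing the relevant block by a controllable amount, so a sufficiently small initial neighbourhood works; the Whitehead-lemma reduction is then purely algebraic, and the clopen/connectedness argument purely topological.
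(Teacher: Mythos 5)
Your proposal is correct, but note that the paper offers no proof of this proposition at all---it is quoted as a known result with a bare citation to Milnor \cite[\S7]{Mil}. Your argument (the easy direction by contracting each generator $I_n+t\alpha\;\!\bbe_{ij}$; the hard direction by showing $\E_n(\calA)$ contains a norm-neighbourhood of $I_n$ via quantitative Gaussian elimination with invertible pivots and the Whitehead lemma for $\mathrm{diag}(u_1,\dots,u_n)$ with $u_1\cdots u_n=1$, so that $\E_n(\calA)$ is an open, hence clopen, subgroup of $\SL_n(\calA)$, and a connectedness argument on $[0,1]$ forces any path from $M$ to $I_n$ to stay inside it) is exactly the standard proof underlying that citation, so it matches the intended source rather than diverging from it.
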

 
\begin{definition}[Null-homotopic element of $\SL_n(\calA)$]$\;$

\noindent Let $\calA$ be a commutative unital Banach algebra, and $n\in \mN$. 
An element $M\in \SL_n(\calA)$ is {\em null-homotopic} if $M$ is homotopic to the identity matrix $I_n$, that is there exists a continuous map $H:[0,1]\rightarrow \SL_n(\calA)$ such that 
$H(0)=M$ and $H(1)=I_n$. 
\end{definition} 

\noindent We also elaborate on the Banach algebra structure of $\calA^{n\times n}$ for a Banach algebra $\calA$.
 Let $(\calA,\|\cdot\|)$  be a commutative unital Banach algebra. 
 Then $\calA^{n\times n} $ is a complex algebra with the usual matrix operations. 
 Let $\calA^{n\times 1}$ denote the vector space of all column vectors of size $n$ with entries from $\calA$ 
 and componentwise operations. Then $\calA^{n\times 1}$ is a normed space with the `Euclidean norm' 
 defined by $\displaystyle 
 \|\bbv\|_2^2 :=\|v_1\|^2+\cdots+\|v_n\|^2$ for all $\bbv$ in $\calA^{n\times 1}$, where 
  $\bbv$ has components denoted by $v_1,\cdots,v_n\in \calA$.  If $M\in \calA^{n\times n}$, 
 then the matrix multiplication map, 
  $$
 \calA^{n\times 1}\owns \bbv \mapsto M\bbv \in \calA^{n\times 1}, 
 $$
 is 
 a continuous linear transformation, and we equip $\calA^{n\times n}$ with the induced operator norm, denoted by $\|\cdot\|$ again. Then $\calA^{n\times n}$ with this operator norm is a unital Banach algebra. Subsets of $\calA^{n\times n}$ are given the induced subspace topology. We also state the following observation which will be used later. 
 
 \begin{lemma}
 \label{lemma_14_june_2021_10:24}
 Let $(\calA,\|\cdot\|)$  be a commutative unital Banach algebra, and 
  $M=[m_{ij}]\in \calA^{n\times n}$, where the entry in the $i^{\textrm{\em th}}$ row and $j^{\textrm{\em th}}$ column of $M$ is denoted by $m_{ij}$, $1\leq i,j\leq n$. Then 
 $$
 \|M\|^2\leq \displaystyle \sum_{i=1}^n \sum_{j=1}^n \|m_{ij}\|^2.
 $$
 \end{lemma}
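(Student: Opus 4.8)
The plan is to bound the operator norm by estimating $\|M\bbv\|_2$ for an arbitrary $\bbv\in \calA^{n\times 1}$ directly, and then take the supremum over unit vectors. The two ingredients I expect to need are the submultiplicativity of the norm on $\calA$ (available since $\calA$ is a Banach algebra) and the Cauchy--Schwarz inequality in $\mR^n$. First I would write out the $i^{\textrm{th}}$ component of $M\bbv$, namely $(M\bbv)_i=\sum_{j=1}^n m_{ij}v_j$, and apply the triangle inequality followed by submultiplicativity $\|m_{ij}v_j\|\leq \|m_{ij}\|\,\|v_j\|$ to get $\|(M\bbv)_i\|\leq \sum_{j=1}^n \|m_{ij}\|\,\|v_j\|$.

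Next I would apply Cauchy--Schwarz to the right-hand side, regarding $(\|m_{i1}\|,\dots,\|m_{in}\|)$ and $(\|v_1\|,\dots,\|v_n\|)$ as vectors in $\mR^n$; this gives $\|(M\bbv)_i\|^2\leq \big(\sum_{j=1}^n \|m_{ij}\|^2\big)\,\|\bbv\|_2^2$, where the recognition that $\sum_{j=1}^n\|v_j\|^2=\|\bbv\|_2^2$ uses the definition of the Euclidean norm on $\calA^{n\times 1}$. Summing over $i$ from $1$ to $n$ then yields $\|M\bbv\|_2^2=\sum_{i=1}^n\|(M\bbv)_i\|^2\leq \big(\sum_{i=1}^n\sum_{j=1}^n\|m_{ij}\|^2\big)\,\|\bbv\|_2^2$. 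Since $\|M\|$ is the operator norm, dividing by $\|\bbv\|_2^2$ (for $\bbv\neq \mathbf 0$) and taking the supremum produces exactly $\|M\|^2\leq \sum_{i=1}^n\sum_{j=1}^n\|m_{ij}\|^2$.

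As the argument is a direct chain of standard inequalities, I do not anticipate any genuine obstacle; the only point needing care is ensuring that submultiplicativity is invoked correctly so that scalar estimates in $\mR^n$ can take over. This is the Banach-algebra-valued analogue of the elementary fact that the operator norm of a matrix is dominated by its Frobenius (Hilbert--Schmidt) norm, with the absolute values of entries replaced by their norms in $\calA$.
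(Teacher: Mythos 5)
Your proposal is correct and follows essentially the same route as the paper's proof: triangle inequality plus submultiplicativity to get $\|(M\bbv)_i\|\leq \sum_{j=1}^n\|m_{ij}\|\,\|v_j\|$, then the Cauchy--Schwarz inequality in $\mR^n$, summation over $i$, and passage to the operator norm. No gaps; the argument matches the paper step for step.
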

  \begin{proof}  Let $\bbv\in \calA^{n\times 1}$ have components $v_1,\cdots, v_n\in A$. 
  
  \noindent Using the Cauchy-Schwarz inequality in $\mR^n$, we have 
 \begin{eqnarray*}
 \|M\bbv \|_2^2=\sum_{i=1}^n  \Big\| \sum_{j=1}^n m_{ij} v_j\Big\|^2
 \!\!\!\!&\leq&\!\!\!\! \sum_{i=1}^n \Big( \sum_{j=1}^n \|m_{ij}v_j\|\Big)^2
 \leq \sum_{i=1}^n \Big( \sum_{j=1}^n \|m_{ij}\|\|v_j\|\Big)^2
 \\
 \!\!\!\!&\leq&\!\!\!\! \sum_{i=1}^n\sum_{j=1}^n \|m_{ij}\|^2\sum_{k=1}^n \|v_k\|^2
 =
 \sum_{i=1}^n\sum_{j=1}^n \|m_{ij}\|^2 \|\bbv\|_2^2.
 \end{eqnarray*}
 As $\bbv\in \calA^{n\times 1}$ was arbitrary, it follows that $ \|M\|^2\leq \displaystyle \sum_{i=1}^n \sum_{j=1}^n \|m_{ij}\|^2$.
 \end{proof}

\begin{corollary}
 For all $n\in \mN,$ $\eSL_n(A_{\mathrm{sym}}(\mD^d))=\eE_n(A_{\mathrm{sym}}(\mD^d))$. 
\end{corollary}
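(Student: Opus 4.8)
The plan is to apply Milnor's criterion, Proposition~\ref{prop_11_may_2021_18:36}, to the commutative unital Banach algebra $\calA=A_{\mathrm{sym}}(\mD^d)$: it suffices to show that every $M\in \SL_n(A_{\mathrm{sym}}(\mD^d))$ is null-homotopic, i.e.\ homotopic to $I_n$ through $\SL_n(A_{\mathrm{sym}}(\mD^d))$, for then $M\in \E_n(A_{\mathrm{sym}}(\mD^d))$. Since the inclusion $\E_n\subseteq \SL_n$ holds by definition, only $\SL_n\subseteq \E_n$ requires an argument.

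Fix $M=[m_{ij}]\in \SL_n(A_{\mathrm{sym}}(\mD^d))$. Mimicking the contraction used in the proof of Theorem~\ref{theorem_28_dec_2021_18:35}, I would define, for $t\in[0,1]$, a matrix $M_t$ with entries $(M_t)_{ij}(\bbz)=m_{ij}((1-t)\bbz)$. Because the map $\bbz\mapsto (1-t)\bbz$ sends $\overline{\mD}^d$ into itself, is holomorphic on $\mD^d$ and continuous on $\overline{\mD}^d$, and commutes with every coordinate permutation, each $(M_t)_{ij}$ again lies in $A_{\mathrm{sym}}(\mD^d)$. Moreover $M_t(\bbz)=M((1-t)\bbz)$ as a scalar matrix, so $\det M_t(\bbz)=\det M((1-t)\bbz)\equiv 1$, whence $M_t\in \SL_n(A_{\mathrm{sym}}(\mD^d))$ for every $t$. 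At the endpoints $M_0=M$ and $M_1=M(\mathbf{0})$, the latter being a constant matrix belonging to $\SL_n(\mC)$.

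The key step is the continuity of $t\mapsto M_t$ into $\SL_n(A_{\mathrm{sym}}(\mD^d))$ equipped with the operator norm. Here I would invoke Lemma~\ref{lemma_14_june_2021_10:24} to bound $\|M_t-M_s\|^2\leq \sum_{i,j}\|(M_t)_{ij}-(M_s)_{ij}\|_\infty^2$, and then control each sup-norm using the uniform continuity of $m_{ij}$ on the compact set $\overline{\mD}^d$: as $s\to t$, the arguments $(1-s)\bbz$ converge to $(1-t)\bbz$ uniformly in $\bbz\in \overline{\mD}^d$, so $\|(M_t)_{ij}-(M_s)_{ij}\|_\infty\to 0$. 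This is the main (and essentially the only genuinely analytic) obstacle; the remaining verifications are formal.

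Finally, the homotopy above deforms $M$ to the constant matrix $M(\mathbf{0})\in \SL_n(\mC)$. Since $\mC$ is a field, $\SL_n(\mC)$ is path-connected (equivalently, $\SL_n(\mC)=\E_n(\mC)$ by Gaussian elimination), so there is a continuous path in $\SL_n(\mC)$ from $M(\mathbf{0})$ to $I_n$; regarded as constant functions this is a path in $\SL_n(A_{\mathrm{sym}}(\mD^d))$, trivially continuous as the target is finite-dimensional. Concatenating the two homotopies shows that $M$ is null-homotopic, whence $M\in \E_n(A_{\mathrm{sym}}(\mD^d))$ by Proposition~\ref{prop_11_may_2021_18:36}. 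As $M\in \SL_n(A_{\mathrm{sym}}(\mD^d))$ and $n\in\mN$ were arbitrary, we conclude $\SL_n(A_{\mathrm{sym}}(\mD^d))=\E_n(A_{\mathrm{sym}}(\mD^d))$.
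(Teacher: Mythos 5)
Your proposal is correct and follows essentially the same route as the paper: the dilation homotopy $M_t(\bbz)=M((1-t)\bbz)$, continuity via Lemma~\ref{lemma_14_june_2021_10:24} together with uniform continuity on the compact set $\overline{\mD}^d$, path-connectedness of $\SL_n(\mC)$ to join the constant matrix $M(\mathbf{0})$ to $I_n$, and finally Proposition~\ref{prop_11_may_2021_18:36} to pass from null-homotopy to membership in $\E_n$. Your explicit check that dilation preserves symmetry (since $\bbz\mapsto(1-t)\bbz$ commutes with coordinate permutations) is a detail the paper leaves implicit, but the argument is the same.
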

\begin{proof} We produce the homotopy using dilations (see \cite{Sas}, \cite{DK}). 

\noindent 
We need to show $\SL_n(A_{\textrm{sym}}(\mD^d))\subset \E_n(A_{\textrm{sym}}(\mD^d))$. Let
 $$
 M=\left[\!\!\begin{array}{ccc} f_{11}&\cdots & f_{1n} \\ 
 \vdots & \ddots & \vdots \\
f_{n1} & \cdots & f_{nn} \end{array}\!\! \right]\in \SL_n(A_{\textrm{sym}}(\mD^d)).
$$
 For $t\in [0,1]$, define 
 $$
 H(t)=M_t=\left[\!\!\begin{array}{ccc} f((1-t)\cdot)  &\cdots & f((1-t)\cdot)\\ 
 \vdots & \ddots & \vdots \\
 f_{n1}((1-t)\cdot ) & \cdots & f_{nn}((1-t)\cdot) \end{array} \!\!\right].
 $$
 Then $M_t\in \SL_n(A_{\textrm{sym}}(\mD^d))$ for all $t\in [0,1]$, 
 since $\det (M_t)=\mathbf{1}$. 
 The continuity of the map 
 $$
 [0,1]\owns t\mapsto M_t \in \SL_n(A_{\textrm{sym}}(\mD^d))
 $$
  follows from 
 the compactness of $\overline{\mD}^d$ (making elements of $A_{\textrm{sym}}(\mD^d)$ uniformly continuous), and from Lemma~\ref{lemma_14_june_2021_10:24}.  We note that $M_1= C \mathbf{1}$, 
 where $C\in \SL_n(\mC)$ is the constant matrix given by 
 $$
 C=\left[\!\! \begin{array}{ccc}
 f_{11}(\mathbf{0}) & \cdots &  f_{1n}(\mathbf{0}) \\
 \vdots & \ddots & \vdots \\
   f_{n1}(\mathbf{0}) & \cdots &   f_{nn}(\mathbf{0}) 
 \end{array}\!\!\right] .
 $$
 But $\SL_n(\mC)$ is path-connected, and so there exists a homotopy, say $h:[0,1]\rightarrow \SL_n(\mC)$, 
 taking $C$ to the identity matrix $I_n\in \SL_n(\mC)$. Combining $H$ with $h$, 
 we obtain a homotopy $\widetilde{H}:[0,1]\rightarrow \SL_n(A)$ that takes $M$ to $I_n\in \SL_n(A_{\textrm{sym}}(\mD^d))$:
 $$
 \widetilde{H}(t)=\left\{\begin{array}{ll} H(2t) & \textrm{if }\; t\in \Big[0,\displaystyle
 {\scaleobj{0.75}{\frac{1}{2}}}\Big],\\[0.21cm]
 h(2t-\!1) \mathbf{1} & \textrm{if }\; t\in \Big[{\scaleobj{0.75}{\displaystyle\frac{1}{2}}},1\Big].\end{array}\right.
 $$
 So we have shown that $M\in \SL_n(A)$ is null-homotopic. Consequently, by Propposition~\ref{prop_11_may_2021_18:36},  we have $M\in \E_n(A_{\textrm{sym}}(\mD^d))$.
\end{proof}

\section{Noncoherence of $A_{\mathrm{sym}}(\mD^d)$}

\noindent Let $R$ be  a commutative ring. For $n\in \mN$ and elements $r_1,\cdots,r_n\in  R$, we denote by $\langle r_1,\cdots, r_n \rangle$ the ideal in $R$ generated by $r_1,\cdots, r_n$. The ring $A_{\mathrm{sym}}(\mD^d)$ is not Noetherian, since the
ascending chain condition fails, as demonstrated by the chain of ideals
$$
 \langle B_1(z_1)\cdots B_1(z_d)\rangle 
\subsetneq \langle B_2(z_1)\cdots B_2(z_d)\rangle
\subsetneq \langle B_3(z_1)\cdots B_3(z_d)\rangle
\subsetneq\cdots,
$$ 
where $B_n\in A(\mD)$ are the rational functions defined by  
$$
\displaystyle 
B_n(z):=\prod_{k=1}^n \frac{\alpha_n-z}{1-\alpha_n z}
\;\;\textrm{ and }\;\;
\displaystyle 
\alpha_n =1-\frac{1}{n^2} \;\;(n\in \mN).
$$
In absence of the Noetherian `finiteness' property, the next best finiteness condition in commutative algebra is presented when one has a coherent ring.

\begin{definition}
A unital commutative ring $R$ is said to be {\em coherent} if 

$\bullet$ the intersection 
of any two finitely generated ideals in $R$ is \phantom{finitely }
\phantom{aa$\bullet$} finitely 
generated, and 

 $\bullet$ for
every $a \in  R$, the annihilator $\textrm{Ann}(a) := \{x \in R : ax = 0 \}$ is \phantom{Aaa}
\phantom{aa$\bullet$} finitely generated. 
\end{definition}

\noindent We refer the reader to the article \cite{Gla} for the relevance of the property of
coherence in commutative algebra. The following, which we will need, is a slight abstraction of \cite[Observation~2.1]{MorSas}. 

\begin{theorem}
\label{theorem_6_jan_2021_1053}

\noindent Let $\calR_d$ be the class of all commutative unital subrings of $\mC^{\mD^d}:=\{f:\mD^d\rightarrow \mC\}$ 
under the usual pointwise operations. 

\noindent Let $D:R_d\rightarrow R_1$ and $U:R_1\rightarrow R_d$  be  ring homomorphisms, such that 
$$
DUg=g\textrm{ for all }g\in R_1. 
$$
\noindent Then if $R_d \in \calR_d$ is a coherent ring, then $R_1$ is a coherent ring. 
\end{theorem}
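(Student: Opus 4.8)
The plan is to exploit the fact that the identity $DUg=g$ for all $g\in R_1$ exhibits $R_1$ as a \emph{retract} of $R_d$: the section $U$ is injective, the retraction $D$ is surjective, and both are ring homomorphisms. Coherence is the conjunction of two finiteness conditions (finite generation of the annihilator of any single element, and of the intersection of any two finitely generated ideals), so I would verify each condition for $R_1$ separately by the same device: lift the relevant data from $R_1$ to $R_d$ via $U$, invoke coherence of $R_d$ there, and then push the resulting finite generating set back to $R_1$ via $D$. In both arguments the relation $DU=\textrm{id}_{R_1}$ is the engine that makes the pushed-back elements genuinely generate, since applying $D$ to any $R_d$-representation and collapsing $DU$ recovers the original $R_1$-element.

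For the annihilator condition, fix $a\in R_1$ and pass to $Ua\in R_d$. The key observation is that $x\mapsto Ux$ carries $\textrm{Ann}_{R_1}(a)$ into $\textrm{Ann}_{R_d}(Ua)$, because $ax=0$ forces $Ua\cdot Ux=U(ax)=0$. By coherence of $R_d$ write $\textrm{Ann}_{R_d}(Ua)=\langle b_1,\cdots,b_m\rangle$. Then two checks finish it: first, each $Db_i$ lies in $\textrm{Ann}_{R_1}(a)$, since $a\cdot Db_i=(DUa)(Db_i)=D(Ua\cdot b_i)=D(0)=0$ (using $DUa=a$ and $Ua\cdot b_i=0$); second, for any $x\in\textrm{Ann}_{R_1}(a)$ one has $Ux=\sum_i c_i b_i$ with $c_i\in R_d$, whence $x=DUx=\sum_i (Dc_i)(Db_i)$. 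Together these give $\textrm{Ann}_{R_1}(a)=\langle Db_1,\cdots,Db_m\rangle$.

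For the intersection condition, let $I=\langle a_1,\cdots,a_p\rangle$ and $J=\langle b_1,\cdots,b_q\rangle$ be finitely generated ideals of $R_1$. I would form the extended ideals $\widetilde I:=\langle Ua_1,\cdots,Ua_p\rangle$ and $\widetilde J:=\langle Ub_1,\cdots,Ub_q\rangle$ in $R_d$ and use coherence of $R_d$ to write $\widetilde I\cap \widetilde J=\langle w_1,\cdots,w_r\rangle$. Applying $D$ to an $R_d$-representation of $w_k$ inside $\widetilde I$ (resp. $\widetilde J$) and using $DUa_i=a_i$ (resp. $DUb_j=b_j$) shows $Dw_k\in I\cap J$. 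Conversely, for $x\in I\cap J$, applying $U$ to the $R_1$-representations of $x$ in $I$ and in $J$ shows $Ux\in \widetilde I\cap\widetilde J$, so $Ux=\sum_k u_k w_k$ and $x=DUx=\sum_k (Du_k)(Dw_k)$. Hence $I\cap J=\langle Dw_1,\cdots,Dw_r\rangle$ is finitely generated.

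The mechanism is entirely formal, being the standard descent of coherence along a ring retract, so I do not expect a genuine obstacle. The only point requiring care is the extension-and-contraction bookkeeping in the intersection step: one must confirm that both the containment $Dw_k\in I\cap J$ and the spanning of $I\cap J$ rest solely on $DU=\textrm{id}_{R_1}$, and never covertly on the reverse identity $UD=\textrm{id}_{R_d}$, which does not hold. I would also remark that the proof nowhere uses the concrete realization of $R_d$ and $R_1$ as subrings of function algebras; it is valid for any ring retract of a coherent commutative unital ring.
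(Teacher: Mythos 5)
Your proof is correct and is essentially the same as the paper's: both arguments lift the given ideals (respectively the annihilator data) from $R_1$ to $R_d$ via $U$, invoke coherence of $R_d$ to get finite generating sets, and push the generators back with $D$, using $DU=\mathrm{id}_{R_1}$ in exactly the same two places to verify both containments. Your closing observation that the argument is formal retract descent, never using the realization as function rings, agrees with the paper's own footnote that only the abstract properties of $D$ and $U$ matter.
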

\begin{proof}\footnote{The proof is the same, mutatis mutandis, as that of \cite[Observation~2.1]{MorSas} (since it turns out that there the above abstract properties of $D$ and $U$ were crucial, rather than their concrete explicit form). So we include the details of the short proof here for the convenience of the reader and to keep the discussion self-contained.}
 Suppose that $I$ and $J$ are two finitely generated ideals in $R_1$, say $I =
\langle f_1, \cdots , f_K \rangle$  and $J = \langle g_1, \cdots , g_L\rangle$, and let $I_d$ and $J_d$ 
be the ideals in $R_d$ generated by the functions $U f_k$ ($k = 1 , \cdots , K$), 
 respectively $Ug_{\ell}$ ($\ell=1,\cdots, L$). 
 The coherence of $R_d$ implies that $I_d \cap J_d$ is finitely generated. 
 Let $\{p_1, \cdots , p_M \}$ 
be a set of generators for the ideal $I_d \cap J_d$ in $R_d$. Then 
$\{Dp_1, \cdots , Dp_M\}$ is a set of generators for $I \cap J$. 
Indeed, if $f \in I \cap J$, then in particular,
 $f \in  I = \langle f_1, \cdots , f_K \rangle$, so that there exist 
 $\alpha_1, \cdots , \alpha_K\in  R_1$ such that
$f = \alpha_1 f_1 + \cdots + \alpha_K f_K$. 
Hence we have $U f  = ( U \alpha_1)(U f_1) + \cdots + ( U \alpha_K )(U f_K ) \in  
\langle U f_1, \cdots , U f_K \rangle
= I_d$. Similarly, $U f  \in  J_d$ too. Thus  
$U f  \in I_d \cap J_d = \langle p_1, \cdots , p_M \rangle$, and so
there exist $\gamma_1, \cdots , \gamma_M\! \in\!  R_d$ such that 
$U f  = \gamma_1p_1 + \cdots +\gamma_M p_M$. 
Consequently, $f = DU f = ( D\gamma_1)(Dp_1) + \cdots + ( D\gamma_M )(Dp_M ) 
\in  \langle Dp_1, \cdots , Dp_M\rangle$, and so $I \cap J \subset \langle Dp_1, \cdots , Dp_M \rangle$. 
Vice versa, for any $m = 1 , \cdots , M$, we have 
 $p_m \in I_d = \langle U f_1, \cdots , U f_K \rangle$. So there exist 
 $\theta_1, \cdots , \theta_K \in  R_d$ such that 
 $p_m =\theta_1(U f_1) + \cdots + \theta_K (U f_K )$. 
 Applying $D$, 
 \begin{eqnarray*}
 Dp_m \!\!\!&=&\!\!\! ( D\theta_1)(DU f_1) + \cdots + ( D\theta_K )(DU f_K )
\\
\!\!\!&=&\!\!\! ( D\theta_1)f_1 + \cdots + ( D\theta_K )f_K \in  \langle f_1, \cdots , f_K \rangle = I.
\end{eqnarray*}
 Similarly, each $Dp_m\in J$. Hence $\{Dp_1, \cdots , Dp_M \} 
 \subset  I \cap J$, and so the ideal $\langle Dp_1, \cdots , Dp_M \rangle \subset I \cap J$ too. 
Hence $I \cap J =\langle Dp_1, \cdots , Dp_M \rangle $, and so $ I \cap J$ is finitely generated.

It remains to verify the condition on the annihilators.
Suppose $f \in R_1$. Then $Uf\in R_d$. As $\textrm{Ann}(Uf)$ is finitely generated, 
there exist $h_1,\cdots, h_r\in R_d$ which generate $\textrm{Ann}(Uf)$. 
We claim that $\textrm{Ann}(f )$ is generated by $Dh_1, \cdots , Dh_r$. Suppose $g\in \textrm{Ann}(f )$, that is, $fg=0$. Then $(Uf)(Ug)=U(fg)=U(0)=0$, and so $Ug\in \textrm{Ann}(Uf)=\langle h_1,\cdots, h_r\rangle$. So there exist $\beta_1,\cdots, \beta_r\in R_d$ such that 
$Ug=\beta_1 h_1+\cdots+\beta_r h_r$. Applying $D$,  we obtain 
$
g=DUg=(D\beta_1)(D h_1)+\cdots+(D\beta_r)(D h_r)$ and so $g\in \langle Dh_1,\cdots, Dh_r\rangle$, 
 showing that $\textrm{Ann}(f )\subset \langle Dh_1, \cdots , Dh_r\rangle$. Vice versa, since for $1\leq k\leq r$, we have $h_k\in \textrm{Ann}(Uf)$, it follows that  $(Uf) h_k \! = \! 0$, which yields $f(Dh_k) \! = \! (DUf)(Dh_k) \! = \! D((Uf)h_k) \! = \! D0 \! = \! 0$, that is, $Dh_k\in \textrm{Ann}(f)$. Thus $\langle Dh_1,\cdots, Dh_r\rangle \subset \textrm{Ann}(f)$ too. Consequently,  $\textrm{Ann}(f )$ is finitely generated, completing the proof that  $R_1$ is coherent.
\end{proof}

\noindent In \cite[Observation~2.1]{MorSas}, maps $D:R_d\rightarrow DR_d$ and $U:DR_d\rightarrow R_d$ were defined as follows:
$$
\begin{array}{l}
(Df)(z)=f(z,0,\cdots, 0),  \quad z\in \mD,                \;\;     f\in R_d, \\
(Ug)(z_1,\cdots ,z_d) = g(z_1),            \quad (z_1,\cdots, z_d)\in \mD^d, \;\; g\in DR_d.
\end{array}
$$
However, these cannot be used for $R_d:=A_{\textrm{sym}}(\mD^d)$, 
since the property $UDf\in R_d$ for all $f\in R_d$, demanded in \cite[Observation~2.1]{MorSas},  fails: For example,  for $d\!=\!2$ and with   $f\in R_2 \! = \! A_{\mathrm{sym}}(\mD^2)$ given by 
$ f(z,w) \! = \! z+w$ for $(z,w)\in \mD^2$, we get $(UD f)(z,w) \!= \!z$, so that $UDf\not\in A_{\textrm{sym}}(\mD^2) \!= \!R_2$. 

\noindent In order to apply Theorem~\ref{theorem_6_jan_2021_1053}, we instead use the following.

\begin{lemma}
\label{lemma_6_jan_2022_17:53}
Let $R_d:=A_{\mathrm{sym}}(\mD^d)$, and define the maps 
$D:R_d\rightarrow R_1$ and 
$U:R_1\rightarrow R_d$ by 
$$
\begin{array}{l}
(Df)(z)=f(z,\cdots, z),  \quad z\in \mD,                \;\;         f\in R_d, \\[0.1cm]
(Ug)(z_1,\cdots ,z_d) = \displaystyle g\Big(\frac{z_1+\cdots+z_d}{d}\Big),           
 \quad (z_1,\cdots, z_d)\in \mD^d, \;\; g\in R_1.
\end{array}
$$
Then $D,U$ are well-defined ring homomorphisms$,$ and 
$$
DUg=g\textrm{ for all }g\in R_1.
$$ 
\end{lemma}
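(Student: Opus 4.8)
The plan is to treat $D$ and $U$ separately: in each case I would first confirm well-definedness (that the image lands in the asserted algebra) and then observe that the ring-homomorphism property is automatic, before finally verifying the one-line identity $DU=\mathrm{id}$. Since both operators are defined by precomposition with a fixed map, compatibility with the pointwise ring operations will be immediate, and the actual content lies entirely in the well-definedness checks. The only step requiring a genuine (if elementary) argument is the well-definedness of $U$.

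For $D$, note that $Df$ is the restriction of $f$ to the diagonal via the map $\delta:z\mapsto(z,\cdots,z)$, which is polynomial, hence holomorphic on $\mD$, and sends $\overline{\mD}$ continuously into $\overline{\mD}^d$. Thus $Df=f\circ\delta$ is holomorphic on $\mD$ and continuous on $\overline{\mD}$, so $Df\in A(\mD)$. Since $S_1$ is trivial, every element of $A(\mD)$ is symmetric, and hence $Df\in R_1$ with no further check needed.

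For $U$, I would introduce the averaging map $\mu(\bbz):=(z_1+\cdots+z_d)/d$. The key observation is that $\mu$ carries $\mD^d$ into $\mD$ and $\overline{\mD}^d$ into $\overline{\mD}$: by the triangle inequality, $|\mu(\bbz)|\leq(|z_1|+\cdots+|z_d|)/d$, which is strictly less than $1$ when each $|z_j|<1$ and at most $1$ when each $|z_j|\leq 1$ (this is simply the convexity of the disc). Consequently $Ug=g\circ\mu$ is well-defined on $\overline{\mD}^d$; it is holomorphic on $\mD^d$ as a composition of holomorphic maps and continuous on $\overline{\mD}^d$, so $Ug\in A(\mD^d)$. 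Symmetry is automatic, because $\mu(\sigma\;\!\bbz)=\mu(\bbz)$ for every $\sigma\in S_d$, the sum being unaffected by reordering; hence $Ug\in A_{\mathrm{sym}}(\mD^d)=R_d$.

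It then remains to record that $D$ and $U$ are unital ring homomorphisms and that $DU=\mathrm{id}$. Both homomorphism claims follow at once from the definitions: for instance $D(f_1f_2)=(f_1f_2)\circ\delta=(f_1\circ\delta)(f_2\circ\delta)=(Df_1)(Df_2)$, and likewise for addition and for $U$, with $\mathbf{1}$ plainly sent to $\mathbf{1}$. Finally, for $g\in R_1$ and $z\in\mD$ one computes $(DUg)(z)=(Ug)(z,\cdots,z)=g((z+\cdots+z)/d)=g(z)$, giving $DUg=g$. The main (and essentially only) obstacle is thus the well-definedness of $U$: the convexity/triangle-inequality verification that $\mu$ maps the closed polydisc into $\overline{\mD}$, together with the symmetry observation. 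Everything else is formal bookkeeping.
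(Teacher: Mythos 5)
Your proposal is correct and follows essentially the same route as the paper's proof: well-definedness of $D$ and $U$ via the diagonal and averaging maps (including the symmetry of $Ug$ from permutation-invariance of the sum), the ring-homomorphism property read off from precomposition with pointwise operations, and the identity $DUg=g$ by the same one-line computation. If anything, you are slightly more explicit than the paper in spelling out via the triangle inequality that the averaging map carries $\mD^d$ into $\mD$ and $\overline{\mD}^d$ into $\overline{\mD}$, a point the paper leaves implicit.
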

\begin{proof} For $f\in R_d$, $Df$ is clearly holomorphic in $\mD$, and possesses a continuous extension to $\overline{\mD}$. Thus $Df\in A(\mD)=R_1$. Also, it is clear that $D$ is a ring homomorphism. 

The map $\mD^d\owns (z_1,\cdots, z_d)\mapsto \frac{z_1+\cdots+z_d}{d}\in \mD$ is continuous. 
So it follows that $Ug \in A(\mD^d)$. 
For $g\in R_1$, $\sigma \in S_d$, and $(z_1,\cdots, z_d)\in \mD^d$,  
\begin{eqnarray*}
(Ug)(z_{\sigma(1)},\cdots, z_{\sigma(d)})
\!\!\!&=&\!\!\!
g\Big(\frac{z_{\sigma(1)}+\cdots+ z_{\sigma(d)}}{d}\Big)=g\Big(\frac{z_1+\cdots+z_d}{d}\Big)\\
\!\!\!&=&\!\!\!(Ug)(z_1,\cdots ,z_d),
\end{eqnarray*}
showing that $Ug\in A_{\mathrm{sym}}(\mD^d)=R_d$. Moreover, it is clear that $U$ is a ring homomorphism. 

Finally, we check that $DUg=g$ for all $g\in R_1=A(\mD)$.  Let 
$g\in A(\mD)$. Then for all $(z_1,\cdots, z_d)\in \mD^d$, we have 

$$
\Phi(z_1,\cdots, z_d):=(Ug)(z_1,\cdots, z_d)=g\Big(\frac{z_1+\cdots+ z_d}{d}\Big).
$$
 Hence for all $z\in \mD$, 
$$
(D(Ug))(z)= (D\Phi)(z)=\Phi(z,\cdots, z)
=g\Big(\frac{z+\cdots+z}{d}\Big)
=g(z).
$$
This completes the proof.
\end{proof}

\noindent As the disk algebra $R_1:=A(\mD)$ is not coherent (see \cite{McVRub}, or \cite{MorSas} for a simpler proof), it follows from Theorem~\ref{theorem_6_jan_2021_1053} and Lemma~\ref{lemma_6_jan_2022_17:53} that:

\begin{corollary}
 $A_{\mathrm{sym}}(\mD^d)$ is not coherent. 
\end{corollary}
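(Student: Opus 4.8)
The final statement is the Corollary asserting that $A_{\mathrm{sym}}(\mD^d)$ is not coherent.

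The plan is to invoke the machinery already assembled just before the statement, namely Theorem~\ref{theorem_6_jan_2021_1053} and Lemma~\ref{lemma_6_jan_2022_17:53}, together with the known non-coherence of the disk algebra $A(\mD)$. The strategy is a contrapositive (transfer) argument: if $A_{\mathrm{sym}}(\mD^d)$ were coherent, then coherence would be inherited by $R_1 = A(\mD)$, contradicting a known fact. Concretely, I would first record that Lemma~\ref{lemma_6_jan_2022_17:53} supplies ring homomorphisms $D:R_d\rightarrow R_1$ and $U:R_1\rightarrow R_d$ (with $R_d := A_{\mathrm{sym}}(\mD^d)$ and $R_1 := A(\mD)$) satisfying the retraction identity $DUg = g$ for all $g\in R_1$. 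This is precisely the hypothesis needed to run Theorem~\ref{theorem_6_jan_2021_1053}.

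Next I would verify the remaining structural hypothesis of Theorem~\ref{theorem_6_jan_2021_1053}: that $R_d$ belongs to the class $\calR_d$, i.e. that it is a commutative unital subring of $\{f:\mD^d\rightarrow\mC\}$ under pointwise operations. This is immediate, since $A_{\mathrm{sym}}(\mD^d)$ is by definition a subalgebra of $A(\mD^d)$ with pointwise operations and contains the constant function $\mathbf{1}$. With this in hand, Theorem~\ref{theorem_6_jan_2021_1053} asserts: \emph{if} $R_d$ is coherent, \emph{then} $R_1$ is coherent.

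Finally, I would cite the known non-coherence of the disk algebra $A(\mD)$ (see \cite{McVRub}, or \cite{MorSas} for a shorter proof), so that $R_1 = A(\mD)$ is \emph{not} coherent. Combining this with the implication from Theorem~\ref{theorem_6_jan_2021_1053} forces the conclusion that $R_d = A_{\mathrm{sym}}(\mD^d)$ cannot be coherent: coherence of $R_d$ would propagate to $R_1$, which is false. This completes the argument. There is no genuine obstacle here, since all the work has been front-loaded into Theorem~\ref{theorem_6_jan_2021_1053} and Lemma~\ref{lemma_6_jan_2022_17:53}; the only subtlety worth double-checking is that the maps $D$ and $U$ chosen in Lemma~\ref{lemma_6_jan_2022_17:53} genuinely land in the correct (symmetric) rings and satisfy $DU = \mathrm{id}$ on $R_1$ — which is exactly why the averaging map $Ug(z_1,\dots,z_d)=g(\tfrac{z_1+\cdots+z_d}{d})$ is used in place of the naive coordinate maps that failed the symmetry requirement, as noted in the discussion preceding the lemma.
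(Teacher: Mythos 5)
Your proposal is correct and follows exactly the paper's own route: the paper likewise deduces the corollary by combining Theorem~\ref{theorem_6_jan_2021_1053} with the maps $D$, $U$ of Lemma~\ref{lemma_6_jan_2022_17:53} and the known non-coherence of the disk algebra $A(\mD)$ from \cite{McVRub} (or \cite{MorSas}). Your extra check that $R_d\in\calR_d$ and the contrapositive phrasing are just explicit spellings-out of the same one-line argument.
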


\section{The Wiener algebra $W^+_{\mathrm{sym}}(\mD^d)$ of symmetric functions}

\noindent Analogous to the polydisc algebra of symmetric functions, one can also consider $W^+_{\mathrm{sym}}(\mD^d)$, the analytic Wiener algebra on the polydisc, consisting of symmetric functions. All the results of the previous sections can be proved in the same way (mutatis mutandis) also for this algebra. We discuss this algebra here for a different reason. A fundamental theorem in algebra states that every symmetric polynomial can be expressed uniquely as a polynomial in the symmetric polynomials. One may ask if an analogue of this result carries over for the $W^+_{\mathrm{sym}}(\mD^d)$, and 
we investigate this question here. We begin with the pertinent definitions.

\noindent Let 
 $$
\mN_0:=\{0,1,2,3,\cdots\}.
$$ 
 For $d\in \mN$, $\bbn =(n_1,\cdots, n_d)\in \mN_0^d$, and 
$\bbz=(z_1,\cdots, z_d)\in \mC^d$, we set 
\begin{eqnarray*}
|\bbn|\!\!\!&:=&\!\!\!n_1+\cdots+n_d,\;\;\textrm{ and }\\
\bbz^{\bbn}\!\!\!&:=&\!\!\!z_1^{n_1}\cdots z_d^{n_d}, \;\; z_k^{0}:=1 \;(1\leq k\leq d).
\end{eqnarray*}
The classical {\em analytic Wiener polydisc algebra} $W^+(\mD^d)$ is the Banach algebra 
$$
W^+(\mD^d)=\Big\{ f:\mD^d\rightarrow \mC: f(\bbz)=\sum_{\bbn \in \mN_0^d} c_{\bbn} \bbz^\bbn \; 
(\bbz\in \mD^d), \; \sum_{\bbn \in \mN_0^d} |c_{\bbn}|<\infty\Big\},
$$
with pointwise operations, and the norm $\|\cdot\|_1$ given by 
$$
\|f\|_1:=\sum_{\bbn \in \mN_0^d} |c_{\bbn}|,\textrm{ for }f=\sum_{\bbn \in \mN_0^d} c_{\bbn} \bbz^\bbn\in W^+(\mD^d).
$$
We define the {\em analytic Wiener algebra of symmetric functions}, $W^+_{\textrm{sym}}(\mD^d)$, 
to be the subalgebra of $W^+(\mD^d)$ consisting of symmetric functions:
$$
W^+_{\textrm{sym}}(\mD^d):=
\{f\in W^+(\mD^d): f\textrm{ is a symmetric function}\}.
$$
Then $W^+_{\textrm{sym}}(\mD^d)$ is a Banach subalgebra of $W^+(\mD^d)$ with the same norm $\|\cdot\|_1$. We also have $W^+_{\textrm{sym}}(\mD^d) \subset A_{\textrm{sym}}(\mD^d)$, and  for all $f\in W^+_{\textrm{sym}}(\mD^d)$, 
$$
\|f\|_\infty\leq \|f\|_1
$$
A polynomial $p\in \mC[z_1,\cdots, z_d]$ is {\em symmetric} if 
$$
p(z_1,\cdots, z_d)=p(z_{{\scaleobj{0.75}{\sigma(1)}}},\cdots, z_{{\scaleobj{0.75}{\sigma(d)}}})\textrm{ for all }\sigma \in S_d.
$$

\begin{proposition}
\label{prop_29_dec_2021_17:39}$\;$

\noindent 
The set of symmetric polynomials is dense in $W^+_{\mathrm{sym}}(\mD^d)$. 
\end{proposition}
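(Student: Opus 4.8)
The plan is to approximate any $f\in W^+_{\textrm{sym}}(\mD^d)$ in the $\|\cdot\|_1$ norm by truncating its absolutely convergent power series, and to observe that the natural truncation by total degree automatically preserves symmetry, so that each truncation is itself a symmetric polynomial. This is the exact analogue of the classical density of polynomials in the scalar Wiener algebra; the only new ingredient is the compatibility of the truncation with the $S_d$-action.

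First I would record what symmetry says at the level of coefficients. Writing $f(\bbz)=\sum_{\bbn\in\mN_0^d}c_\bbn \bbz^\bbn$, and letting $S_d$ act on multi-indices by $\sigma\bbn:=(n_{{\scaleobj{0.75}{\sigma(1)}}},\cdots,n_{{\scaleobj{0.75}{\sigma(d)}}})$ in parallel with its action on $\mC^d$, a short reindexing (setting $j=\sigma(k)$) gives $(\sigma\bbz)^\bbn=\bbz^{\sigma^{-1}\bbn}$, whence $f(\sigma\bbz)=\sum_\bbn c_\bbn \bbz^{\sigma^{-1}\bbn}=\sum_\bbm c_{\sigma\bbm}\bbz^\bbm$ after the substitution $\bbm=\sigma^{-1}\bbn$. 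Since the power-series coefficients of an element of $W^+(\mD^d)$ are uniquely determined, the identity $f(\sigma\bbz)=f(\bbz)$ for all $\bbz\in\mD^d$ and all $\sigma\in S_d$ is equivalent to $c_{\sigma\bbn}=c_\bbn$ for all $\bbn\in\mN_0^d$ and all $\sigma\in S_d$; that is, $c_\bbn$ depends only on the $S_d$-orbit of $\bbn$.

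Next, for $N\in\mN$ I would set $p_N:=\sum_{|\bbn|\leq N}c_\bbn\bbz^\bbn$, which is a polynomial since the sum is finite. The crucial point is that $|\sigma\bbn|=|\bbn|$, so the index set $\{\bbn\in\mN_0^d:|\bbn|\leq N\}$ is $S_d$-invariant; combined with the coefficient symmetry $c_{\sigma\bbn}=c_\bbn$ established above, the same reindexing as before shows $p_N(\sigma\bbz)=p_N(\bbz)$, so that $p_N$ is a symmetric polynomial. Finally, convergence is immediate from absolute summability: $\|f-p_N\|_1=\sum_{|\bbn|>N}|c_\bbn|$, which is the tail of the convergent series $\sum_\bbn|c_\bbn|<\infty$ and hence tends to $0$ as $N\to\infty$.

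I do not expect a genuine obstacle here; the argument is essentially the observation that partial summation by total degree commutes with the symmetric-group action. The only point requiring a little care is the coefficient characterization of symmetry, which rests on uniqueness of the coefficients $c_\bbn$ in the Wiener representation (equivalently, on uniqueness of the Taylor coefficients of a holomorphic function on $\mD^d$); once that is in hand, both the symmetry of $p_N$ and the norm estimate are formal.
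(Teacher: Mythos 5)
Your proposal is correct, and every step checks out: the reindexing identity $(\sigma\bbz)^{\bbn}=\bbz^{\sigma^{-1}\bbn}$, the equivalence of symmetry with the coefficient condition $c_{\sigma\bbn}=c_{\bbn}$ (justified by uniqueness of Taylor coefficients of holomorphic functions on $\mD^d$), the $S_d$-invariance of the index set $\{|\bbn|\leq N\}$, and the tail estimate $\|f-p_N\|_1=\sum_{|\bbn|>N}|c_{\bbn}|\to 0$. However, this is not the route the paper takes in its actual proof: there, one starts from the density of \emph{all} polynomials in $W^+(\mD^d)$, picks any sequence $(p_n)$ with $\|f-p_n\|_1\to 0$, and symmetrizes it by group averaging, setting $q_n:=\frac{1}{d!}\sum_{\sigma\in S_d}\sigma\;\!p_n$; since each $\sigma$ acts isometrically on $(W^+(\mD^d),\|\cdot\|_1)$, one gets $\sigma\;\!p_n\to\sigma f=f$ and hence $q_n\to f$. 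Amusingly, the paper then immediately presents essentially your argument as a second, ``more constructive'' proof: it forms the homogeneous components $p_k=\sum_{|\bbn|=k}c_{\bbn}\bbz^{\bbn}$, verifies their symmetry via the symmetry of the partial derivatives of $f$ at $\mathbf{0}$ (which is just your coefficient identity $c_{\sigma\bbn}=c_{\bbn}$ in disguise), and notes that the partial sums $p_0+\cdots+p_N$ converge to $f$ in $\|\cdot\|_1$ --- exactly your $p_N$. As for what each approach buys: the averaging argument is shorter and more portable, since it works verbatim in any Banach algebra of functions in which polynomials are dense and the $S_d$-action is isometric (for instance $A_{\mathrm{sym}}(\mD^d)$, where coefficientwise reasoning is less natural), whereas your truncation argument is specific to the Wiener norm but yields canonical approximants with an exact error term, and it is precisely the decomposition $f=\sum_{k\geq 0}p_k$ that the paper reuses later when discussing expressing $f$ as a power series in the elementary symmetric polynomials.
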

\begin{proof} As polynomial functions are dense in $W^+(\mD^d)$, we know that given any $f\in W^+_{\textrm{sym}}(\mD^d)\subset W^+(\mD^d)$, there is a sequence of polynomials $(p_n)_{n\in \mN}$ that converges to $f$ in the $\|\cdot\|_1$-norm. But then it follows that 
$(\sigma \;\!p_n)_{n\in \mN}$ conerges to $\sigma f=f$ in the $\|\cdot\|_1$-norm. Hence 
$$
q_n:=\frac{1}{d!} \sum_{\sigma \in S_d} \sigma \;\!p_n\rightarrow \frac{1}{d!}\sum_{\sigma \in S_d} f=f 
$$
as $n\rightarrow \infty$ in the $\|\cdot\|_1$-norm. As $(q_n)_{n\in \mN}$ is a sequence of symmetric polynomials, the proof is complete. 
\end{proof}

\noindent In fact, we can show the  same result in a more constructive manner as follows. 
Let $f\in W^+_{\mathrm{sym}}(\mD^d)$ have the Taylor expansion
$$
f=\sum_{\bbn\in \mN_0^d} c_{\bbn} \bbz^\bbn\quad (\bbz\in \mD^d).
$$
Then defining the polynomials 
$$
p_k=\sum_{|\bbn|=k}c_{\bbn} \bbz^\bbn \quad (k\geq 0),
$$
we have that $p_k$ is a symmetric polynomial (shown below) of (total) degree $k$, and 
\begin{equation}
\label{eq_29_dec_2021_2:05}
f=\sum_{k=0}^\infty p_k. \quad \;\;\;\phantom{(k\geq 0)}
\end{equation}
Thus the sequence of symmetric polynomials $(p_0+p_1+\cdots+p_n)_{n\in \mN}$ converges to $f$ in $W^+_{\mathrm{sym}}(\mD^d)$. To see that $p_k$ is a symmetric polynomial, firstly 
$$
c_\bbn=\frac{1}{n_1 ! \cdots n_d!} \frac{\partial^{|\bbn|} f\hfill}{\partial z_1^{n_1}\cdots z_d^{n_d}\hfill}(\mathbf{0}),
$$
and since $f$ is symmetric, it follows that for any $\sigma \in S_d$, 
$$
\frac{\partial^{|\bbn|} f\hfill}{\partial z_1^{n_1}\cdots z_d^{n_d}\hfill}(\mathbf{0})
=
\frac{\partial^{|\bbn|} f\hfill}{\partial z_1^{n_{{\scaleobj{0.81}{\sigma^{{\scaleobj{0.81}{-1}}}(1)}}}}\cdots z_d^{n_{{\scaleobj{0.81}{\sigma^{{\scaleobj{0.81}{-1}}}(d)}}}}\hfill}(\mathbf{0}).
$$
Consequently, 
\begin{eqnarray*}
\!\!\!&&\!\!\!(\sigma\;\! p_k)(z_1,\cdots, z_d)=p_k(z_{{\scaleobj{0.81}{\sigma(1)}}},\cdots, z_{{\scaleobj{0.81}{\sigma(d)}}})\phantom{\sum}
\\
\!\!\!&=&\!\!\!
\sum_{|\bbn|=k} 
\frac{1}{n_1 ! \cdots n_d!} \frac{\partial^{|\bbn|} f\hfill}{\partial z_1^{n_1}\cdots z_d^{n_d}\hfill}(\mathbf{0})
z_{{\scaleobj{0.81}{\sigma(1)}}}^{n_1}\cdots z_{{\scaleobj{0.81}{\sigma(d)}}}^{n_d}\\
\!\!\!&=&\!\!\!
\sum_{|\bbn|=k} 
\frac{1}{{n_{{\scaleobj{0.81}{\sigma^{{\scaleobj{0.81}{-1}}}(1)}}}} ! \cdots {n_{{\scaleobj{0.81}{\sigma^{{\scaleobj{0.81}{-1}}}(d)}}}}!} \frac{\partial^{|\bbn|} f\hfill}{\partial z_1^{n_{{\scaleobj{0.81}{\sigma^{{\scaleobj{0.81}{-1}}}(1)}}}}\cdots z_d^{n_{{\scaleobj{0.81}{\sigma^{{\scaleobj{0.81}{-1}}}(d)}}}}\hfill}(\mathbf{0})
z_{1}^{n_{{\scaleobj{0.81}{\sigma^{{\scaleobj{0.81}{-1}}}(1)}}}}\cdots z_{d}^{n_{{\scaleobj{0.81}{\sigma^{{\scaleobj{0.81}{-1}}}(d)}}}}
\\[0.15cm]
\!\!\!&=&\!\!\!p(z_1,\cdots, z_d).
\end{eqnarray*}
 Let $d\in \mN$, and $n\in \mN$ is such that $n\leq d$. 
An {\em elementary symmetric polynomial of degree $n$ in $d$ variables} is a symmetric polynomial $s_n$ of the form  
$$
s_n = \sum_{1\leq i_1<i_2<\cdots<i_n \leq d} z_{i_1} \cdots z_{i_d}.
$$

\goodbreak

\noindent 
For example if $d=2$, and we call the two variables $z,w$, then 
\begin{eqnarray*}
s_1\!\!\!&=&\!\!\!z+w, \\
s_2\!\!\!&=&\!\!\!zw; \phantom{AAAAAAAAAAA}
\end{eqnarray*} 
if $d=3$, then 
\begin{eqnarray*}
s_1\!\!\!&=&\!\!\!z_1+z_2+z_3,\\
s_2\!\!\!&=&\!\!\! z_1z_2+z_1z_3+ z_2z_3, \textrm{ and}\\
s_3\!\!\!&=&\!\!\!z_1z_2z_3;
\end{eqnarray*}
 and so on. 
 
 A fundamental result on symmetric polynomials states that every symmetric polynomial can be expressed uniquely as a polynomial in the symmetric polynomials (see e.g. \cite[Chap.14, Thm. 3.4]{Art}):

\begin{proposition}
\label{prop_29_dec_2021_15:35}
Every symmetric polynomial $p\in \mC[z_1,\cdots, z_d]$ can be written in a unique way as a polynomial in the elementary symmetric polynomials, that is, there exists a unique polynomial $q\in \mC[s_1,\cdots, s_d]$ such that 
 $
p(z_1,\cdots, z_d)= q\;\!\pmb{(}s_1(z_1,\cdots, z_d), \cdots, s_d(z_1,\cdots, z_d)\pmb{)}.
$
\end{proposition}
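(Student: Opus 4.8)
The plan is to prove existence and uniqueness separately, both by ordering monomials lexicographically. First I would identify each monomial $z_1^{a_1}\cdots z_d^{a_d}$ with its exponent vector $\bba=(a_1,\dots,a_d)\in\mN_0^d$ and order these by the lexicographic order, writing $\bba\succ\bbb$ when the first nonzero entry of $\bba-\bbb$ is positive. For a nonzero polynomial I would write $\mathrm{LM}$ for its lexicographically largest monomial. Two facts about this order drive the whole argument: it is a monomial order, so $\mathrm{LM}$ is multiplicative, i.e. the leading monomial of a product is the product of the leading monomials; and each elementary symmetric polynomial $s_j$ has leading exponent vector $(1,\dots,1,0,\dots,0)$ with $j$ ones.

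For existence I would argue by induction on $\mathrm{LM}(p)$. The key preliminary observation is that if $p$ is symmetric with leading exponent vector $\bba$, then $a_1\geq a_2\geq\cdots\geq a_d$: by symmetry every permutation of $\bba$ is the exponent vector of a monomial occurring in $p$, and among all permutations of a fixed vector the lexicographically largest is the weakly decreasing one. Using multiplicativity of $\mathrm{LM}$ together with the leading vectors of the $s_j$, a short telescoping computation shows that the product $s_1^{a_1-a_2}s_2^{a_2-a_3}\cdots s_{d-1}^{a_{d-1}-a_d}s_d^{a_d}$ has leading exponent vector exactly $\bba$, where the exponents $a_j-a_{j+1}$ and $a_d$ are all nonnegative precisely because $\bba$ is weakly decreasing. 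Subtracting $c$ times this product, where $c$ is the leading coefficient of $p$, yields a symmetric polynomial whose leading monomial is strictly smaller. Since the subtraction never raises the total degree and there are only finitely many monomials of each total degree, the lexicographic order is well-founded on the monomials occurring here and the process terminates, expressing $p$ as a polynomial in $s_1,\dots,s_d$.

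For uniqueness I would reduce to the algebraic independence of $s_1,\dots,s_d$ over $\mC$: if $q_1$ and $q_2$ both represent $p$, then $q:=q_1-q_2$ satisfies $q(s_1,\dots,s_d)=0$ identically in the $z_k$, and algebraic independence forces $q=0$, hence $q_1=q_2$. To establish algebraic independence, I would again pass to leading terms. By multiplicativity, the monomial $s_1^{b_1}\cdots s_d^{b_d}$ has leading exponent vector $\Psi(\bbb):=(b_1+\cdots+b_d,\,b_2+\cdots+b_d,\,\dots,\,b_d)$, and the map $\Psi$ is injective since it is upper triangular with unit diagonal, with inverse given by successive differences $b_i=\Psi(\bbb)_i-\Psi(\bbb)_{i+1}$. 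Thus distinct exponent vectors $\bbb$ contribute distinct leading monomials, so in any nonzero $q=\sum_{\bbb}d_{\bbb}\,s_1^{b_1}\cdots s_d^{b_d}$ there is a unique summand whose leading monomial is lexicographically largest; this monomial cannot cancel, so $q(s_1,\dots,s_d)\neq0$.

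The hard part is not any single step but isolating and proving cleanly the two leading-term lemmas on which everything rests: that $\mathrm{LM}$ is multiplicative for the lexicographic order, and that the exponent transformation $\Psi$ is a bijection. The first underlies the matching subtraction in the existence proof, and the second underlies the non-cancellation of leading terms in the uniqueness proof; once both are in hand, existence and uniqueness follow by the descent and injectivity arguments above.
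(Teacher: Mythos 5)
Your proposal is correct, but note that the paper does not actually prove Proposition~\ref{prop_29_dec_2021_15:35}: it records the statement as a classical fact and refers to Artin (Chap.~14, Thm.~3.4), so the comparison is with the cited textbook proof rather than with an internal argument. What you have written is a complete, self-contained rendition of the classical lexicographic leading-monomial proof (essentially Gauss's argument), and all the delicate points are handled: the leading exponent of a symmetric polynomial is weakly decreasing (so the exponents $a_j-a_{j+1}$, $a_d$ are nonnegative); multiplicativity of $\mathrm{LM}$ is legitimate because $\mC$ is an integral domain, so leading coefficients cannot cancel in a product; termination holds because every leading exponent encountered lies in the finite set of vectors of total degree at most $\deg p$; and in the uniqueness step the injectivity of $\Psi(\bbb)=(b_1+\cdots+b_d,\ldots,b_d)$ together with the fact that each $s_j$ is lex-monic (so every monomial of $s_1^{b_1}\cdots s_d^{b_d}$ is lex-at most $\Psi(\bbb)$, with coefficient $1$ at $\Psi(\bbb)$) shows the lex-maximal term of a nonzero $q$ survives, giving algebraic independence of $s_1,\ldots,s_d$. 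By contrast, the proof in the cited source proceeds by a double induction on the number of variables and the degree: one sets $z_d=0$, applies the inductive hypothesis to the symmetric polynomial $p(z_1,\ldots,z_{d-1},0)$ in $d-1$ variables, and observes that the difference vanishes on $\{z_d=0\}$, hence by symmetry is divisible by $z_1\cdots z_d=s_d$ with symmetric quotient of lower degree. Your route buys an explicit rewriting algorithm (with strict lex descent at each step, hence effective computability of $q$) and derives existence and uniqueness from the same two leading-term lemmas you isolate; the induction-on-variables route is shorter to state given basic divisibility in polynomial rings but is less constructive and still needs a separate independence argument for uniqueness. Either way, your proof is valid as written.
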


\noindent For example, if $d=2$, then 
$$
z^2+w^2= (z+w)^2-2zw=s_1^2-2s_2.
$$  
An immediate corollary of this result and of Proposition~\ref{prop_29_dec_2021_17:39} is the following.

\begin{corollary} 
The set 
$\mC[s_1,\cdots, s_d]$ of polynomials in the elementary symmetric polynomials is dense in $W^+_{\mathrm{sym}}(\mD^d)$.
\end{corollary}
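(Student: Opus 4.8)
The plan is to deduce this directly by combining the two immediately preceding results: Proposition~\ref{prop_29_dec_2021_17:39}, which gives density of the set of symmetric polynomials in $W^+_{\mathrm{sym}}(\mD^d)$, and Proposition~\ref{prop_29_dec_2021_15:35}, the fundamental theorem of symmetric polynomials. The key observation is that, as subsets of $W^+_{\mathrm{sym}}(\mD^d)$, the set $\mC[s_1,\cdots, s_d]$ of polynomials in the elementary symmetric polynomials coincides exactly with the set of all symmetric polynomials.

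First I would verify this set equality in both directions. For the easy inclusion, any $q\in \mC[s_1,\cdots, s_d]$ becomes, upon substituting the explicit expressions for $s_1,\cdots, s_d$ in terms of $z_1,\cdots, z_d$, a polynomial in $z_1,\cdots, z_d$ that is manifestly invariant under every permutation $\sigma\in S_d$ (since each $s_j$ is), hence a symmetric polynomial. For the reverse inclusion, Proposition~\ref{prop_29_dec_2021_15:35} asserts precisely that every symmetric polynomial $p\in \mC[z_1,\cdots, z_d]$ equals $q(s_1,\cdots, s_d)$ for some $q\in \mC[s_1,\cdots, s_d]$, so it lies in $\mC[s_1,\cdots, s_d]$. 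Thus the two sets are identical as collections of functions on $\mD^d$.

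With this identification in hand, the conclusion is immediate. Proposition~\ref{prop_29_dec_2021_17:39} already shows that the set of symmetric polynomials is dense in $W^+_{\mathrm{sym}}(\mD^d)$ in the $\|\cdot\|_1$-norm, and since this set is exactly $\mC[s_1,\cdots, s_d]$, the latter is dense as well.

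There is no serious obstacle here; the content is entirely carried by the two cited propositions, and the only point requiring care is the bookkeeping of the set equality, in particular keeping consistent the two meanings of a \emph{polynomial in the elementary symmetric polynomials}: a formal element of $\mC[s_1,\cdots, s_d]$ versus the symmetric function on $\mD^d$ it represents. Note that since we only need density of the resulting functions in $W^+_{\mathrm{sym}}(\mD^d)$, the uniqueness clause of Proposition~\ref{prop_29_dec_2021_15:35} plays no role; only the existence (surjectivity onto the symmetric polynomials) is used.
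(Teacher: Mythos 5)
Your proof is correct and matches the paper exactly: the paper offers no separate argument, stating the corollary as an immediate consequence of Proposition~\ref{prop_29_dec_2021_17:39} (density of symmetric polynomials) and Proposition~\ref{prop_29_dec_2021_15:35} (every symmetric polynomial is a polynomial in $s_1,\cdots,s_d$), which is precisely the combination you spell out. Your added care about the set equality $\mC[s_1,\cdots,s_d]=\{\text{symmetric polynomials}\}$ and the observation that only the existence (not uniqueness) clause is needed are correct refinements of the same argument.
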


\subsection{An open question} 

\noindent For $f\in W^+_{\mathrm{sym}}(\mD^d)$, by using the decomposition \eqref{eq_29_dec_2021_2:05} above, and expressing each $p_k$ in terms of a polynomial in $s_1,\cdots, s_d$, one obtains a formal power series $g\in \mC\llbracket s_1,\cdots, s_d\rrbracket$. 
One could hope that $g$ then converges on a domain containing the set 
$$
\{(\zeta_1,\cdots,\zeta_d)\in \mC^d:\exists \bbz\in \mD^d\textrm{ such that } s_1(\bbz)=\zeta_1,\;\cdots, s_d(\bbz)=\zeta_d\},
$$
 so that  
$$
f(\bbz)=g(s_1(\bbz),\cdots, s_d(\bbz)) \quad (\bbz\in \mD^d)
$$
could hold.  Is this true? 

\goodbreak 

\noindent For example, let $d=2$, and consider $f$ given by 
$$
f=\sum_{n=1}^\infty \frac{1}{n^2 2^n} (z^2+w^2)^n.
$$
As each term in the summand $(z^2+w^2)^n$ has total degree $2n$, we have 
\begin{eqnarray*}
\|(z^2+w^2)^n\|_1\!\!\!&=&\!\!\!\binom{n}{0}+\cdots +\binom{n}{n}\\
\!\!\!&=&\!\!\!(1+1)^n=2^n, 
\end{eqnarray*}
and so   
$$
\|f\|_1= \sum_{n=1}^\infty \frac{1}{n^2 2^n} 2^n=\sum_{n=1}^\infty \frac{1}{n^2 }<\infty.
$$
Hence $f\in  W^+_{\textrm{sym}}(\mD^d)$. Using 
$$
z^2+w^2=(z+w)^2-2zw=s_1^2-2s_2,
$$ 
we obtain for $(z,w)\in \mD^2$ that 
\begin{eqnarray*}
f(z,w)\!\!\!&=&\!\!\!\sum_{n=1}^\infty \frac{1}{n^2 2^n} (z^2+w^2)^n\\
\!\!\!&=&\!\!\!\sum_{n=1}^\infty \frac{1}{n^2 2^n} ((s_1(z,w))^2-2s_2(z,w))^n
\\
\!\!\!&=&\!\!\!\sum_{n=1}^\infty \sum_{k=0}^n \frac{1}{n^2 2^n}\binom{n}{k} (-2)^k (s_1(z,w))^{2n-2k} (s_2(z,w))^{k}. 
\end{eqnarray*}
One would then like to conclude that for $(z,w)\in \mD^2$,
\begin{eqnarray*}
f(z,w)\!\!\!&=&\!\!\! g(s_1(z,w), s_2(z,w)),
\end{eqnarray*}
where $g\in \mC\llbracket s_1,s_2 \rrbracket$ is given by 
\begin{eqnarray*}
g(s_1,s_2)\!\!\!&=&\!\!\!
 \sum_{n=1}^\infty \sum_{k=0}^n \frac{1}{n^2 2^n}\binom{n}{k} (-2)^k s_1^{2n-2k} s_2^k
\\
\!\!\!&=&\!\!\!
\frac{s_1^2-2s_2}{1^2 2^1}+\frac{(s_1^2-2s_2)^2}{2^2 2^2}+\frac{(s_1^2-2s_2)^3}{3^2 2^3}+\cdots \\
\!\!\!&=&\!\!\!
-s_2+\frac{1}{2}s_1^2+\frac{1}{4} s_2^2-\frac{1}{9} s_2^3-\frac{1}{2}s_1^2s_2+\cdots.
\end{eqnarray*}

%
%\bigskip
%
%\noindent 
%{\bf Data Availability Statement:} The manuscript has no associated data.
%

\end{document}